\newcommand{\ZZ}{\mathbb{Z}}
\newtheorem{thm}{Theorem}[section]
\newtheorem{cor}[thm]{Corollary}
\newtheorem{lem}[thm]{Lemma}
\newtheorem{prop}[thm]{Proposition}
\theoremstyle{definition}
\theoremstyle{remark}
\theoremstyle{remark}
\DeclareMathOperator{\Z}{\mathbb{Z}}
\DeclareMathOperator{\R}{\mathbb{R}}
\DeclareMathOperator{\F}{\mathbb{F}}
\DeclareMathOperator{\Q}{\mathbb{Q}}
\DeclareMathOperator{\C}{\mathbb{C}}
\DeclareMathOperator{\V}{\mathcal{V}}
\DeclareMathOperator{\Aut}{Aut}
\DeclareMathOperator{\Hom}{Hom}
\DeclareMathOperator{\GL}{GL}
\DeclareMathOperator{\Img}{Im}
\begin{document}
\begin{center}
{\Large \bf  Spherical 2-designs and lattices from Abelian groups}

\vspace{7mm}
{\large Albrecht B\"ottcher, Simon Eisenbarth,\\[0.5ex]

Lenny Fukshansky, Stephan Ramon Garcia, Hiren Maharaj}
\end{center}

\bigskip
\begin{quote}
{\bf Abstract.}
We consider lattices generated by finite Abelian groups. We prove that such a lattice is strongly eutactic,
which means the normalized minimal vectors of the lattice form a spherical 2-design, if and only if the group is of
odd order or if it is a power of the group of order 2. This result also yields a criterion for
the appropriately normalized minimal vectors to constitute a uniform normalized tight frame. Further, our result combined with a
recent theorem of R. Bacher produces (via the classical Voronoi criterion) a new infinite family of extreme lattices. Additionally,
we investigate the structure of the automorphism groups of these lattices, strengthening our previous results in this
direction.

\let\thefootnote\relax\footnote{\hspace*{-7.5mm} MSC 2010: Primary 11H06, 05B30, 52C17; Secondary 11H31, 11H50, 42C15}
\let\thefootnote\relax\footnote{\hspace*{-7.5mm} Keywords: strongly eutactic lattice, spherical 2-design, uniform normalized tight frame}
\let\thefootnote\relax\footnote{\hspace*{-7.5mm}  Fukshansky acknowledges support by NSA grant H98230-1510051, Garcia acknowledges support by NSF grant DMS-1265973.}

\end{quote}

\def\A{{\mathcal A}}
\def\AA{{\mathfrak A}}
\def\B{{\mathcal B}}
\def\C{{\mathbb C}}
\def\D{{\mathcal D}}
\def\EE{{\mathfrak E}}
\def\F{{\mathcal F}}
\def\G{{\mathcal G}}
\def\x{{\mathcal H}}
\def\I{{\mathcal I}}
\def\II{{\mathfrak I}}
\def\J{{\mathcal J}}
\def\K{{\mathcal K}}
\def\kk{{\mathfrak K}}
\def\L{{\mathcal L}}
\def\LL{{\mathfrak L}}
\def\M{{\mathcal M}}
\def\mm{{\mathfrak m}}
\def\MM{{\mathfrak M}}
\def\N{{\mathcal N}}
\def\O{{\mathcal O}}
\def\OO{{\mathfrak O}}
\def \P{{\mathfrak P}}
\def \R {{\mathbb R}}
\def\W{{\mathcal W}}
\def \PNR{{\mathcal P_N(\real)}}
\def\PMNR{{\mathcal P^M_N(\real)}}
\def\PdNR{{\mathcal P^d_N(\real)}}
\def\S{{\mathcal S}}
\def\V{{\mathcal V}}
\def\X{{\mathcal X}}
\def\Y{{\mathcal Y}}
\def\Z{{\mathbb Z}}
\def\H{{\mathcal H}}
\def\cee{{\mathbb C}}
\def\Nn{{\mathbb N}}
\def\pee{{\mathbb P}}
\def\que{{\mathbb Q}}
\def\QQ{{\mathbb Q}}
\def\real{{\mathbb R}}
\def\bS{{\mathbb S}}
\def\RR{{\mathbb R}}
\def\zed{{\mathbb Z}}
\def\ZZ{{\mathbb Z}}
\def\aaa{{\mathbb A}}
\def\ff{{\mathbb F}}
\def\HDelta{{\it \Delta}}
\def\kk{{\mathfrak K}}
\def\qbar{{\overline{\mathbb Q}}}
\def\kbar{{\overline{K}}}
\def\ybar{{\overline{Y}}}
\def\kkbar{{\overline{\mathfrak K}}}
\def\ubar{{\overline{U}}}
\def\eps{{\varepsilon}}
\def\ahat{{\hat \alpha}}
\def\bhat{{\hat \beta}}
\def\k{{\nu}}
\def\gt{{\tilde \gamma}}
\def\h{{\tfrac12}}
\def\be{{\boldsymbol e}}
\def\bff{{\boldsymbol f}}
\def\bei{{\boldsymbol \e_i^\T }}
\def\bc{{\boldsymbol c}}
\def\bm{{\boldsymbol m}}
\def\bk{{\boldsymbol k}}
\def\bi{{\boldsymbol i}}
\def\bl{{\boldsymbol l}}
\def\bq{{\boldsymbol q}}
\def\bu{{\boldsymbol u}}
\def\bt{{\boldsymbol t}}
\def\bs{{\boldsymbol s}}
\def\bv{{\boldsymbol v}}
\def\bw{{\boldsymbol w}}
\def\bx{{\boldsymbol x}}
\def\bbx{{\overline{\boldsymbol x}}}
\def\bX{{\boldsymbol X}}
\def\bz{{\boldsymbol z}}
\def\bwy{{\boldsymbol y}}
\def\bY{{\boldsymbol Y}}
\def\bL{{\boldsymbol L}}
\def\ba{{\boldsymbol a}}
\def\bb{{\boldsymbol b}}
\def\bet{{\boldsymbol\eta}}
\def\bxi{{\boldsymbol\xi}}
\def\bo{{\boldsymbol 0}}
\def\bone{{\boldsymbol 1}}
\def\bol{{\boldsymbol 1}_L}
\def\ep{\varepsilon}
\def\p{\boldsymbol\varphi}
\def\q{\boldsymbol\psi}
\def\rank{\operatorname{rank}}
\def\aut{\operatorname{Aut}}
\def\lcm{\operatorname{lcm}}
\def\sgn{\operatorname{sgn}}
\def\spn{\operatorname{span}}
\def\md{\operatorname{mod}}
\def\Norm{\operatorname{Norm}}
\def\dim{\operatorname{dim}}
\def\det{\operatorname{det}}
\def\Vol{\operatorname{Vol}}
\def\rk{\operatorname{rk}}
\def\ord{\operatorname{ord}}
\def\ker{\operatorname{ker}}
\def\div{\operatorname{div}}
\def\Gal{\operatorname{Gal}}
\def\GL{\operatorname{GL}}
\def \SNR{\operatorname{SNR}}
\def\WR{\operatorname{WR}}
\def\IWR{\operatorname{IWR}}
\def\scg{\operatorname{\left< \Gamma \right>}}
\def\swrh{\operatorname{Sim_{WR}(\Lambda_h)}}
\def\ch{\operatorname{C_h}}
\def\cht{\operatorname{C_h(\theta)}}
\def\scgt{\operatorname{\left< \Gamma_{\theta} \right>}}
\def\scgmn{\operatorname{\left< \Gamma_{m,n} \right>}}
\def\gat{\operatorname{\Omega_{\theta}}}
\def\Obar{\operatorname{\overline{\Omega}}}
\def\Lbar{\operatorname{\overline{\Lambda}}}
\def\mn{\operatorname{mn}}
\def\disc{\operatorname{disc}}
\def\rot{\operatorname{rot}}
\def\Prob{\operatorname{Prob}}
\def\co{\operatorname{co}}
\def\ot{\operatorname{o_{\tau}}}
\def\Aut{\operatorname{Aut}}
\def\Mat{\operatorname{Mat}}
\def\SL{\operatorname{SL}}
\def\id{\operatorname{id}}

\def\calP{\mathcal{P}}
\def\u{{\bf u}}
\def\e{{\bf e}}
\def\E{{\bf \overline{e}  }}

\def\T {{\mathrm{T}}}

\section{Introduction and main result}
\label{introd}

A collection of points $\bwy_1,\dots,\bwy_m$ on the unit sphere $\bS_{n-1}$ in $\real^n$ is called a {\it spherical $t$-design} for some integer $t \geq 1$ if for every polynomial $f(\bX) = f(X_1,\dots,X_n)$ with real coefficients of degree $\leq t$ the equality
\begin{equation}
\int_{\bS_{n-1}} f(\bX)\ d \mu(\bX) = \frac{1}{m} \sum_{i=1}^m f(\bwy_i)\label{deft}
\end{equation}
holds, where $\mu$ is the surface measure normalized so that $\mu(\bS_{n-1})=1$. Spherical designs  were introduced in the
  celebrated 1977 paper~\cite{delsarte} of Delsarte, Goethals, and Seidel and  have been studied extensively ever since for their remarkable properties and many applications within and outside of mathematics.  The strong connection between spherical designs and lattices  was first observed by B. B. Venkov. See in particular~\cite{venkov} (also surveyed in~\cite{martinet}, Chapter~16)
 and the nice survey of Venkov's fundamental work on this subject written by Nebe~\cite{nebe}.

Let $\Lambda \subset \real^n$ be a lattice of full rank. The {\it minimal norm} of $\Lambda$ is defined as
$$|\Lambda| = \min \left\{ \|\bx\| : \bx \in \Lambda \setminus \{ \bo \} \right\},$$
where $\|\ \|$ denotes the Euclidean norm, and we let
$$S(\Lambda) = \left\{ \bx \in \Lambda : \|\bx\| = |\Lambda| \right\}$$
denote the {\it set of minimal vectors} of $\Lambda$. Clearly $S(\Lambda)$ is a symmetric set, and the set
$$S'(\Lambda) := \frac{1}{|\Lambda|} S(\Lambda) = \left\{ \frac{1}{|\Lambda|} \bx: \bx \in \Lambda , \|\bx\| = |\Lambda| \right\}$$
is a finite subset of the unit sphere $\bS_{n-1}$. The lattice $\Lambda$ is called {\it strongly eutactic} if $S'(\Lambda)$ is a spherical 2-design.

We remark that the original definition of strongly eutactic lattices is different. However, it is equivalent to the one given here, that is, to the property that $S'(\Lambda)$ is a spherical 2-design. We refer to~\cite{martinet} (Section~3.2 and Chapter~16, especially Corollary~16.1.3) and~\cite{nebe} for further information on this. There are several equivalent criteria for spherical designs, and hence for the strong eutaxy condition on lattices, e.g. Venkov's criterion (Proposition~16.1.2 and Theorem~16.1.4 of~\cite{martinet}) as well as Theorem~4.1 of \cite{lenny}.

We will use the convenient criterion of Theorem 16.1.2 of \cite{martinet}, which states that a lattice $\Lambda$ in dimension $n$ is strongly eutactic if and only if
\begin{equation}
\sum_{\bx \in S(\Lambda)} (\bx,\bwy)^2 =   \frac{|\Lambda|^2 |S(\Lambda)|}{n}(\bwy,\bwy)\label{mar}
\end{equation}
for all $\bwy \in \spn_{\real} \Lambda$, where $(\ ,\ )$ stands for the usual dot product of vectors and  $|S(\Lambda)|$ is the cardinality of the set $S(\Lambda)$.
In fact, it
is sufficient to check the associated bilinear form for a basis. Namely, if
$\{\bb_1, \ldots, \bb_n\}$ is a basis of $\spn_{\real} \Lambda$, then $\Lambda$ is strongly eutactic if and
only if
\begin{equation}
\sum_{\bx \in S(\Lambda)} (\bx,\bb_i)(\bx,\bb_j)=\frac{|\Lambda|^2 |S(\Lambda)|}{n}(\bb_i,\bb_j)\label{marie}
\end{equation}
for all $i,j \in \{1,\ldots,n\}$.

Strongly eutactic lattices are important in lattice theory due to their central role in discrete optimization problems, especially sphere packing. For instance, A. Sch\"urmann recently proved~\cite{achill} that all perfect strongly eutactic lattices are periodic extreme, i.e., these lattices cannot be ``locally modified'' to yield a better periodic packing (recall that a full-rank lattice $\Lambda$ in $\real^n$ is called perfect if the set of $n \times n$ symmetric matrices $\{ \bx \bx^\top : \bx \in S(\Lambda) \}$ spans the space of all $n \times n$ symmetric matrices as a real vector space). In fact, it has been proved by Voronoi around 1900 that a lattice is extreme (i.e., is a local maximum of  the packing density function on the space of lattices in a fixed dimension) if and only if it is perfect and {\it eutactic},  a condition being weaker than strong eutaxy; we refer the reader to Martinet's book~\cite{martinet} and Nebe's paper~\cite{nebe} for definitions and further information. While many of the standard lattices, such as indecomposable root lattices, are known to be strongly eutactic, a full classification of strongly eutactic lattices is only known in small dimensions. This makes constructions of strongly eutactic lattices in arbitrary dimensions particularly interesting.

In this paper, we revisit the family of lattices generated by finite Abelian groups that we studied previously in~\cite{bo_et_al}, showing that many of them are strongly eutactic. Here is our main result.

\begin{thm} \label{MAIN}
Let $G=\{g_1 := 0,g_2,\ldots,g_{n}\}$ be a finite {\rm(}additively written{\rm)} Abelian group of order $n \ge 2$,
let $L_G$ be the sublattice of the root lattice
$$A_{n-1}=\left\{\bx  =(x_1,\ldots,x_{n}) \in \mathbb{Z}^n: \sum_{i=1}^n x_i =0\right\}$$
which is defined by
\begin{equation}
L_G=\left\{\bx  =(x_1,\ldots,x_{n}) \in A_{n-1}: \sum_{j=2}^{n} x_j g_j=0\right\} \label{alb1}.
\end{equation}
Then $L_G$ is strongly eutactic in $\spn_\real L_G$ if and only if $G$ has odd order or $G$ is isomorphic to $(\zed/2\zed)^\nu$ for some $\nu \geq 1$.
\end{thm}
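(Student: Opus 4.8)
The plan is to recast the criterion (\ref{mar}) as a single matrix condition. Since $L_G$ is a finite-index sublattice of $A_{n-1}$, we have $\spn_\real L_G = H$, the hyperplane $\sum_i x_i = 0$, so (\ref{mar}) is applied in dimension $n-1$. Form the frame matrix $F = \sum_{\bx \in S(L_G)} \bx\bx^\T$. Every minimal vector lies in $H$, so $F\bone = \bo$ and $\Img F \subseteq H$; thus (\ref{mar}) is equivalent to $F|_H$ being a scalar multiple of the identity (the scalar is then forced to be $4|S(L_G)|/(n-1)$ by taking traces). The key symmetry is that $G$ acts on the coordinates by translation $g : i \mapsto i'$, where $g_{i'} = g_i + g$; this permutation preserves both $\sum_i x_i = 0$ and the defining condition of (\ref{alb1}), hence preserves $S(L_G)$. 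Therefore $F$ commutes with the regular permutation representation of $G$, so it is a $G$-circulant: $F_{ab} = \phi(g_a - g_b)$ for a single even function $\phi : G \to \real$, whose eigenvectors are the characters $\chi$ with eigenvalues $\widehat\phi(\chi)$. As $H \otimes \mathbb{C}$ is spanned by the $v_\chi$ for nontrivial $\chi$, a short Fourier-inversion argument shows that $F|_H$ is scalar if and only if $\widehat\phi(\chi)$ is constant over nontrivial $\chi$, which holds if and only if the off-diagonal value $\phi(h)$ is independent of $h \in G\setminus\{0\}$. Everything thus reduces to deciding when this single quantity $\phi$ is constant.

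First I would identify the minimal vectors. A root $\be_i - \be_j$ lies in $L_G$ only if $g_i = g_j$, hence never for $i \ne j$; and $A_{n-1}$ is even, so the minimal squared norm is an even integer $\ge 4$. For $n \ge 4$ it equals $4$: the squared-norm-$4$ vectors of $A_{n-1}$ are exactly $\be_i + \be_j - \be_k - \be_l$ with $\{i,j\}\cap\{k,l\}=\emptyset$, and such a vector lies in $L_G$ iff $g_i + g_j = g_k + g_l$ (an additive quadruple). Existence of at least one quadruple, i.e.\ $|L_G| = 2$, follows from a pigeonhole count once $n \ge 4$, while the two exceptional small cases $n=2$ ($G\cong\zed/2\zed$) and $n=3$ ($G\cong\zed/3\zed$) live in dimension $\le 2$, are strongly eutactic by inspection, and both lie in the claimed family.

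Next I would compute $\phi(h) = \sum_{\bx\in S(L_G)} x_a x_b$ for $h = g_a - g_b \ne 0$. Splitting each minimal vector into its two ``$+$'' and two ``$-$'' coordinates, $x_a x_b$ equals $+1$ when $a,b$ carry equal signs, $-1$ when opposite, and $0$ otherwise, so $\phi(h)$ is the difference of two counts of additive quadruples through the pair $\{a,b\}$. Carrying out these counts is the technical heart, and it is precisely here that the $2$-torsion of $G$ enters: the number of ways to write a fixed sum as $\gamma + \delta$ with $\gamma \ne \delta$ involves $r_2(s) = \#\{\gamma : 2\gamma = s\}$, equal to $|G[2]|$ on $2G$ and $0$ off $2G$ (with $G[2] = \{g : 2g = 0\}$), while the ``opposite-sign'' count splits according to whether $2h = 0$. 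Using that $g_a + g_b$ and $g_a - g_b$ differ by $2g_b \in 2G$ (which restores the dependence on $h$ alone), I expect to arrive at
\begin{equation}
\phi(h) = 2 - n + 2\,[\,h \notin G[2]\,] \;-\; |G[2]|\,[\,h \in 2G\,], \qquad h \ne 0, \label{philan}
\end{equation}
where $[\,\cdot\,]$ is the Iverson indicator. The careful bookkeeping of the degenerate and overlapping configurations in these counts is the step I expect to be the main obstacle.

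Finally I would read off the answer from (\ref{philan}), setting $\eta(h) := \phi(h) - (2-n) = 2[\,h\notin G[2]\,] - |G[2]|\,[\,h\in 2G\,]$, so that $\phi$ is constant iff $\eta$ is constant on $G\setminus\{0\}$. If $|G|$ is odd then $G[2] = \{0\}$ and $2G = G$, giving $\eta\equiv 1$; if $G\cong(\zed/2\zed)^\nu$ then $G[2] = G$ and $2G = \{0\}$, giving $\eta\equiv 0$. In both cases $\phi$ is constant and $L_G$ is strongly eutactic. For the converse, assume $\{0\}\subsetneq G[2]\subsetneq G$, so $G$ has an element of order $2$ and one of order $>2$. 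When $|G[2]|\ge 4$, the values $\{-|G[2]|,0\}$ taken by $\eta$ on $G[2]\setminus\{0\}$ and the values $\{2-|G[2]|,2\}$ taken off $G[2]$ form disjoint sets, so any order-$2$ element together with any order-$>2$ element already witnesses non-constancy. The one delicate case is $|G[2]| = 2$, where both middle values collapse to $0$; here I would instead exhibit an element of order $>2$ lying outside $2G$ (a value $\eta = 2$), whose existence follows from $|G\setminus 2G| = n/2 \ge 2$ and the uniqueness of the order-$2$ element. In every bad case this produces $\phi(h)\ne\phi(h')$, so $L_G$ is not strongly eutactic, completing the equivalence.
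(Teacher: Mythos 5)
Your proposal is correct, but it takes a genuinely different route from the paper. The paper proves the three directions by three separate arguments: for $n$ even with $1<\kappa<n$ (where $\kappa=|G[2]|$) it uses an integrality obstruction --- the eutaxy constant $4n(n-3)+4n(\kappa-1)/(n-1)$ must be an integer because the $(\bx,\bwy)^2$ are integers, forcing $n-1\mid\kappa-1$; for elementary Abelian $G$ it invokes the doubly transitive action of the holomorph and the resulting decomposition of $\real^n$ into two irreducible components; and for odd $n$ it verifies the bilinear criterion \eqref{marie} directly by a long but elementary manipulation of sums over the sets $R_z$ and $S_z$ of representations $g_a+g_b=z$. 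You instead unify everything through the $G$-circulant structure of $F=\sum_{\bx\in S(L_G)}\bx\bx^{\T}$: translation by $G$ permutes coordinates and preserves $S(L_G)$, so $F_{ab}=\phi(g_a-g_b)$, the characters diagonalize $F$, and strong eutaxy becomes the single condition that $\phi$ is constant on $G\setminus\{0\}$. I checked your asserted formula $\phi(h)=4-n-2[h\in G[2]]-\kappa[h\in 2G]$ by the same counting you outline (the same-sign count is $n-2-\kappa[h\in 2G]$ unordered pairs times $2$ for the sign choice, the opposite-sign count is $2(n-3+[h\in G[2]])$ coming from the excluded indices $g_q\in\{0,h,-h\}$), and it is correct; it is also consistent with Proposition~\ref{minum} via the identity $\phi(0)+\sum_{h\neq 0}\phi(h)=0$, since every minimal vector has coordinate sum zero. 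Your endgame, including the delicate $\kappa=2$ case where one must pick $h\notin 2G\cup G[2]$, is also right. What the paper's route buys is brevity in the negative direction (no computation of $\phi$ at all) and a conceptual, representation-theoretic reason for eutaxy in the elementary Abelian case; what your route buys is a single uniform mechanism for all $n\ge 4$, plus strictly more information --- the full spectrum of $F$ on the hyperplane, hence a quantitative measure of how eutaxy fails when $1<\kappa<n$. The only caveat is that your central computation is stated as an expectation rather than carried out; since the formula is correct, filling in that bookkeeping is routine.
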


The Abelian group lattices $L_G$ were also recently studied by R. Bacher~\cite{bacher}, who proved that when $|G| \geq 9$, the lattice $L_G$ is perfect. Combining Bacher's Theorem~5.3, Example~5.2.1 and Example~5.2.2 of~\cite{bacher} with our Theorem~\ref{MAIN} and using Voronoi's criterion, we obtain the following corollary. 

\begin{cor} \label{extreme} Suppose that $|G| \geq 7$ is odd or $G \cong (\zed/2\zed)^{\nu}, \nu \geq 3$. Then the lattice $L_G$ is extreme.
\end{cor}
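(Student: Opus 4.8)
The plan is to deduce the corollary from the classical Voronoi criterion recalled in the introduction, which asserts that a full-rank lattice is extreme if and only if it is both perfect and eutactic. Accordingly, I would regard $L_G$ as a full-rank lattice in $\spn_\real L_G$ and establish these two properties separately under the stated hypotheses.

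The eutaxy half is immediate from our main theorem. Since strong eutaxy is a stronger condition than eutaxy --- as noted in the introduction, following Martinet and Nebe --- every strongly eutactic lattice is in particular eutactic. Under the hypotheses of the corollary, $G$ either has odd order (with $|G| \geq 7$) or is isomorphic to $(\zed/2\zed)^\nu$ with $\nu \geq 3$; in both cases Theorem~\ref{MAIN} applies and shows that $L_G$ is strongly eutactic in $\spn_\real L_G$, and hence eutactic.

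For perfectness I would invoke the results of Bacher~\cite{bacher}. His Theorem~5.3 gives that $L_G$ is perfect whenever $|G| \geq 9$, which already covers every admissible odd order $|G| \geq 9$ together with the groups $(\zed/2\zed)^\nu$ for $\nu \geq 4$ (so that $|G| = 2^\nu \geq 16$). The only admissible groups lying below this threshold are $\zed/7\zed$, of order $7$, and $(\zed/2\zed)^3$, of order $8$; these two lattices are precisely the ones treated directly in Example~5.2.1 and Example~5.2.2 of~\cite{bacher}, where perfectness is verified by hand. Hence $L_G$ is perfect for every $G$ allowed by the hypotheses.

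With both perfectness and eutaxy established, Voronoi's criterion yields that $L_G$ is extreme. There is no genuine analytic difficulty here: the corollary is a synthesis of Theorem~\ref{MAIN}, Bacher's perfectness theorem and examples, and the Voronoi criterion. The single point that demands attention is the case analysis at the boundary --- because Bacher's general perfectness result requires $|G| \geq 9$, one must take care to dispatch the two smallest admissible groups $\zed/7\zed$ and $(\zed/2\zed)^3$ through the cited examples rather than the general theorem.
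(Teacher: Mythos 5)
Your proposal is correct and follows exactly the paper's own argument: the paper derives the corollary by combining Theorem~\ref{MAIN} (strong eutaxy, hence eutaxy) with Bacher's Theorem~5.3 for $|G| \geq 9$ and his Examples~5.2.1 and~5.2.2 for the two small cases $\zed/7\zed$ and $(\zed/2\zed)^3$, then applying Voronoi's criterion. Your explicit attention to the boundary cases below Bacher's $|G| \geq 9$ threshold is precisely the point the paper handles via those two examples.
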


\noindent
In fact, based on some computational evidence we conjecture that all lattices $L_G$ are eutactic, and hence extreme (notice that we prove that when $|G|$ is even and $G$ is not isomorphic to a power of $\zed/2\zed$ then $L_G$ is not strongly eutactic, but it still appears likely to be eutactic).

We also want to mention the connection between spherical $2$-designs and frames.
Let $m \geq n$  and let $\bx_1,\dots,\bx_m \in \real^n$ be a collection of vectors of norm $\sqrt{n/m}$ such that
\[\real^n = \spn_{\real} \{ \bx_1,\dots,\bx_m \} \;\:\mbox{and}\;\:
\sum_{i=1}^m (\bx_i, \bwy)^2 = \|\bwy\|^2\;\:\mbox{for each}\;\: \bwy \in \real^n.\]
Such a collection of vectors is called a {\em uniform normalized tight {\rm(}UNT{\rm)} $(m,n)$-frame}. It is well-known (see, for instance, Proposition 1.2 of~\cite{paulsen}) that a finite subset $\bwy_1,\dots,\bwy_m$ of the unit sphere $\bS_{n-1}$ in $\real^n$ is a spherical 2-design if and only if $\sqrt{n/m}\ \bwy_1,\dots, \sqrt{n/m}\ \bwy_m$ is a UNT
$(m,n)$-frame and $\sum_{i=1}^m \bwy_i = \bo$. In fact, this observation was actually made earlier by B. B. Venkov~\cite{venkov}: it is a special case of a more general criterion, which works for all $t$ (see Proposition 16.1.2 of~\cite{martinet}). Since sets of minimal vectors of lattices are $\bo$-symmetric, the condition $\sum_{i=1}^m \bwy_i = \bo$ is automatically satisfied, and so our lattices also provide a family of UNT $(m,n)$-frames, where $m$ is the number of minimal vectors in the corresponding lattice, as given by~\eqref{numminvecs} below.

In addition to the eutaxy properties, we revisit the automorphism groups of the lattices~$L_G$. Let us recall that the automorphism group of the lattice $L_G$ consists of all the orthogonal transformations of the space $\spn_\real L_G$ that permute $L_G$, hence it is a finite group. We previously proved in~\cite{bo_et_al} that the automorphism group of the lattice $L_G$ contains as a subgroup the automorphism group of the group $G$, more precisely
$\Aut(L_G) \cap S_{n-1} \cong \Aut(G),$
where $n = |G|$ and $S_{n-1}$ is the symmetric group.  Here we strengthen our earlier result.

\begin{thm} \label{auto} Let $G$ be an Abelian group of order $|G| = n \ge 3$ and let $C_k$ denote the cyclic group of order $k$. Then
$$\Aut(L_G) \cong C_2 \times ( G \rtimes \Aut(G) )$$
if and only if $n \in \{3,4,6\}$ or $n \ge 12$.
\end{thm}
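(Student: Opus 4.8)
The plan is to establish the two inclusions separately, always reducing to the action on coordinates, which I index by the elements of $G$ so that a point of $\real^n$ is $(x_g)_{g\in G}$ and $L_G=\{\,\bx\in A_{n-1}:\sum_g x_g\,g=0\,\}$. First I would record the easy inclusion $C_2\times(G\rtimes\Aut(G))\hookrightarrow\Aut(L_G)$, valid for every $n\ge 3$. Each affine map $g\mapsto\phi(g)+a$ with $\phi\in\Aut(G)$ and $a\in G$ permutes the coordinates and preserves $L_G$: the condition $\sum_g x_g=0$ is permutation invariant, an automorphism $\phi$ carries $\sum_g x_g\,g$ to $\phi(\sum_g x_g\,g)=0$, and a translation by $a$ changes it by $a\sum_g x_g=0$. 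These affine permutations form a copy of the holomorph $G\rtimes\Aut(G)$ inside $\Aut(L_G)\cap S_n$. Since $-I$ preserves the symmetric lattice $L_G$, commutes with all permutations, and does not act as a coordinate permutation on $H:=\spn_\real L_G$ when $n\ge 3$, the group $\langle -I\rangle\times(G\rtimes\Aut(G))=C_2\times(G\rtimes\Aut(G))$ embeds in $\Aut(L_G)$.

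Second, I would determine the coordinate-permutation part exactly, namely $\{\sigma\in S_n:\sigma L_G=L_G\}=G\rtimes\Aut(G)$. For $n\ge 4$ the vectors $e_a+e_b-e_c-e_d$ lying in $L_G$ are precisely those with $a+b=c+d$ and $a,b,c,d$ distinct, and (being the norm-$2$ vectors) they are exactly the minimal vectors of $L_G$. A permutation preserving $L_G$ therefore preserves every relation $a+b=c+d$; applying this to the relation $x+y=(x+y)+0$ yields $\sigma(x)+\sigma(y)=\sigma(x+y)+\sigma(0)$, so that $\phi:=\sigma-\sigma(0)$ is an automorphism of $G$ and $\sigma$ is affine. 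This simultaneously recovers the earlier identity $\Aut(L_G)\cap S_{n-1}\cong\Aut(G)$ for the stabilizer $S_{n-1}$ of the $0$-coordinate and supplies the translations. The degenerate case $n=3$, where no such quadruple exists, is absorbed into the small-order analysis below.

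The heart of the matter is the reverse inclusion for $n\in\{3,4,6\}$ and $n\ge 12$: I would show $\Aut(L_G)\subseteq\{\pm I\}\times S_n=\Aut(A_{n-1})$, which together with the first two steps forces $\Aut(L_G)=\{\pm I\}\times(G\rtimes\Aut(G))=C_2\times(G\rtimes\Aut(G))$. The mechanism is to recover the ambient root lattice $A_{n-1}$ intrinsically from $L_G$, so that every isometry of $L_G$ must preserve it. Here it is convenient that $L_G=\ker(A_{n-1}\twoheadrightarrow G,\ \bx\mapsto\sum_g x_g\,g)$ is a sublattice of index $n$ with $\det L_G=n^3$ and $L_G\subseteq A_{n-1}\subseteq L_G^*$, so recovering $A_{n-1}$ amounts to showing that the glue subgroup $A_{n-1}/L_G\cong G$ is a characteristic subgroup of the discriminant group $L_G^*/L_G$ with respect to its discriminant form; equivalently, that the configuration of minimal vectors, whose pairwise inner products lie in $\{0,\pm 1\}$ for distinct vectors taken up to sign, admits no symmetries beyond the signed affine maps already found. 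I expect this rigidity to be the main obstacle and the precise origin of the numerical threshold, since it is exactly what must fail for the exceptional small orders.

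Finally, for $n\in\{5,7,8,9,10,11\}$ I would prove strict containment by exhibiting the extra isometries directly. In these cases $L_G$ turns out to carry hidden symmetries that do not respect the original coordinate frame; for example, when $n=5$ one finds $L_{C_5}\cong\sqrt{5}\,A_4^{*}$ (the minimal norm $2$, the $10$ minimal vectors, and the determinant $125$ all match), whose automorphism group $\{\pm I\}\times S_5$ has order $240$, far exceeding $2\cdot 5\cdot|\Aut(C_5)|=40$. Because there are only finitely many Abelian groups of each of these orders, this amounts to a finite verification — carried out either by producing an explicit extra isometry or by direct computation — and the same finite analysis confirms that $n=3,4,6$ do yield equality, completing the equivalence.
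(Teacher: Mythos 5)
Your first two steps are sound and match the paper's Proposition~\ref{stab}: the embedding $C_2\times(G\rtimes\Aut(G))\hookrightarrow\Aut(L_G)$ and the identification of the coordinate permutations stabilizing $L_G$ with the holomorph. The problem is your third step. You correctly isolate the crux --- that for the relevant $n$ every isometry of $L_G$ must preserve the ambient $A_{n-1}$ --- but you do not prove it; you reformulate it (glue group characteristic in the discriminant group, rigidity of the minimal-vector configuration) and then write that you ``expect this rigidity to be the main obstacle.'' That rigidity \emph{is} the theorem: without it you have only the containment $C_2\times(G\rtimes\Aut(G))\leq\Aut(L_G)$, which holds for all $n$ and cannot distinguish $n\geq 12$ from $n\in\{5,7,8,9,10,11\}$. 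So this is a genuine gap, not a routine verification you may defer.

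For comparison, the paper closes this gap as follows. Any $\varphi\in\Aut(L_G)$ is also an isometry of the dual $L_G^{\#}$ and hence permutes $S(L_G^{\#})$; if one can show $S(L_G^{\#})=S(A_{n-1}^{\#})$, then, since $A_{n-1}^{\#}$ is generated by its minimal vectors, $\varphi$ preserves $A_{n-1}^{\#}$ and therefore $A_{n-1}$, reducing everything to Proposition~\ref{stab}. The equality $S(L_G^{\#})=S(A_{n-1}^{\#})$ is known for cyclic $G$ of order $n\geq 15$ from Martinet's analysis of the Barnes lattices (Theorem~5.3.7 of \cite{martinet}), and the general Abelian case is reduced to the cyclic one by computing the glue group $L_G^{\#}/A_{n-1}^{\#}$ via the dual group $G^{*}=\Hom(G,\Q/\Z)$: each nontrivial character has cyclic image, so its vector of values is a coordinate permutation of a glue vector of $L_{C_n}^{\#}$, and hence every nontrivial coset of $A_{n-1}^{\#}$ in $L_G^{\#}$ has minimal norm exceeding $\sqrt{(n-1)/n}$. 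This character-theoretic transfer to the cyclic case is the idea your proposal is missing. The remaining orders $3\leq n\leq 14$ (covering both your exceptional list and the cases $12\leq n\leq 14$ not reached by Martinet's theorem) are settled in the paper by a Magma computation; your plan of a finite check there, and your observation that $L_{C_5}$ is similar to $A_4^{*}$ with automorphism group of order $240$, are consistent with the paper's table but would also need to be carried out for each group of those orders.
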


We remark that $C_2 \times ( G \rtimes \Aut(G) )$ is always a subgroup of $\Aut(L_G)$. The cases $3 \le n \le 14$
are disposed of by the following table, which was calculated with Magma \cite{magma}.

\begin{table}[!htb]
    \begin{minipage}{.5\linewidth}
		\begin{tabular}{c | c}
			$G \cong$                    & $\tfrac{|\Aut(L_G)|}{| C_2 \times ( G \rtimes \Aut(L_G) ) |}$  \\
			\hline
			$C_3$                       	& $1$ \\
			$C_4$                        & $1$ \\
			$C_2 \times C_2$ 			& $1$ \\
			$C_5$                         & $6$ \\
			$C_6$                         & $1$ \\
			$C_7$                         & $8$ \\
			$C_8$                         & $4$ \\
			$C_2 \times C_4$              & $24$ \\
			$C_2 \times C_2 \times C_2$ 	& $240$
		\end{tabular}
    \end{minipage}%
    \begin{minipage}{.5\linewidth}
		\begin{tabular}{c | c}
			$G \cong$                     & $\tfrac{|\Aut(L_G)|}{| C_2 \times ( G \rtimes \Aut(L_G) ) |}$  \\
			\hline
			$C_9$                         & $6$ \\
			$C_3 \times C_3$              & $72$ \\
			$C_{10}$                      & $6$ \\
			$C_{11}$                      & $12$ \\
			$C_{12}$                      & $1$ \\
			$C_{2} \times C_{6}$     	 & $1$ \\
			$C_{13}$                      & $1$ \\
			$C_{14}$                      & $1$
		\end{tabular}
    \end{minipage}
\end{table}

In Section~\ref{main_statement},  we recall the formula for the number of minimal vectors in the lattice~$L_G$  and
give the proof of Theorem~\ref{MAIN}, split into several parts. We discuss automorphisms of $L_G$ and prove Theorem~\ref{auto} for $n \ge 15$ in Section~\ref{automorphisms}.

\section{Strong eutaxy}
\label{main_statement}

For convenience, when the context is clear,  we will refer to the lattice point $(x_1,\ldots,x_{n})$
by using the formal sum $x_1 g_1 + x_2 g_2 + \ldots + x_n g_n$ in the
group ring $\mathbb{Z}[G]$. The following result is proved in \cite{fu_ma} in the
special case of lattices from elliptic curves. Subsequently, in \cite{bo_et_al} we pointed out
that this  theorem is valid for the lattices $L_G$ with virtually no change to the proof.
We include this argument here for the purposes of self-containment.

\begin{prop} \label{minum}
Assume that $n\ge 4$ and let $\kappa$ denote the
order of the subgroup $G_2 := \{x \in G: 2x = 0 \}$ of $G$.
Then the number of minimal vectors in $L_G$ is
\begin{equation}\label{numminvecs}
\frac{n}{\kappa} \cdot \frac{(n-\kappa)(n-\kappa-2)}{4} +
\left (n-\frac{n}{\kappa} \right)\cdot \frac{n(n-2)}{4}.
\end{equation}
\end{prop}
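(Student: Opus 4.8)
The plan is to describe the minimal vectors of $L_G$ explicitly and then reduce their count to a sum over $G$ that collapses via the doubling homomorphism $x \mapsto 2x$ on $G$.

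\emph{Step 1: the minimal vectors.} Because $g_1,\dots,g_n$ are distinct, a root vector $\be_i-\be_j$ of $A_{n-1}$ (with $i\neq j$) has $g_i-g_j\neq 0$ and so never lies in $L_G$; hence $|L_G|^2\ge 4$. The vectors of squared norm $4$ in $A_{n-1}$ are precisely those of the form $\be_a+\be_b-\be_c-\be_d$ with $a,b,c,d$ distinct, and such a vector belongs to $L_G$ exactly when $g_a+g_b=g_c+g_d$ in $G$. Step~3 below will show that for $n\ge 4$ some value $h\in G$ is attained as a pairwise sum by at least two (necessarily disjoint) index pairs, so at least one such relation holds; therefore $|L_G|=2$ and $S(L_G)$ consists exactly of the vectors $\be_a+\be_b-\be_c-\be_d$ with $a,b,c,d$ distinct and $g_a+g_b=g_c+g_d$.

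\emph{Step 2: reduction to a sum over $G$.} For $h\in G$ let $m(h)$ be the number of unordered pairs $\{i,j\}$ with $i\neq j$ and $g_i+g_j=h$. Each minimal vector is determined by its positive support $\{a,b\}$ and negative support $\{c,d\}$, that is, by an ordered pair of $2$-subsets with equal sum $h$; a vector and its negative correspond to the two orderings of such a pair. The key observation is that two \emph{distinct} $2$-subsets with the same sum $h$ are automatically disjoint: if they shared an index $a$, cancelling $g_a$ would force the two remaining group elements to coincide, contradicting the distinctness of $g_1,\dots,g_n$. Consequently the disjointness requirement is free, the number of admissible ordered pairs with common sum $h$ is $m(h)(m(h)-1)$, and
$$|S(L_G)|=\sum_{h\in G} m(h)\bigl(m(h)-1\bigr).$$

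\emph{Step 3: evaluating $m(h)$.} For each $i$ there is a unique $j$ with $g_j=h-g_i$, so $g_i+g_j=h$ has exactly $n$ ordered solutions, of which the diagonal ones ($i=j$) correspond to solutions of $2x=h$. Since the solution set of $2x=h$ is either empty or a coset of $G_2$, its cardinality $\delta(h)$ equals $\kappa$ for the $n/\kappa$ elements $h$ in the image of doubling and $0$ for the other $n-n/\kappa$ elements. Discarding the $\delta(h)$ diagonal solutions and halving gives $m(h)=(n-\delta(h))/2$, that is, $m(h)=(n-\kappa)/2$ on the image and $m(h)=n/2$ off it; in particular $m(h)=n/2\ge 2$ for non-image $h$ (which exist whenever $n$ is even, as then $\kappa\ge2$), and $m(h)=(n-1)/2\ge 2$ for all $h$ when $n$ is odd (so $n\ge 5$), which supplies the existence claim invoked in Step~1. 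Splitting the sum of Step~2 by these two cases, the $n/\kappa$ image elements each contribute $\tfrac{(n-\kappa)(n-\kappa-2)}{4}$ and the remaining $n-n/\kappa$ elements each contribute $\tfrac{n(n-2)}{4}$, which is exactly \eqref{numminvecs}.

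The step I would watch most carefully is the disjointness observation in Step~2: it is what turns the enumeration into the clean product $m(h)(m(h)-1)$ with no inclusion--exclusion over overlapping supports. The secondary point that genuinely needs $n\ge 4$ is pinning $|L_G|$ down to exactly $2$ rather than something larger, which is why the existence of a repeated pairwise sum is recorded explicitly.
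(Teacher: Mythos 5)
Your proof is correct and follows essentially the same route as the paper: both fix the common sum $z=g_a+g_b=g_c+g_d$, count the pairs with that sum via the doubling map $x\mapsto 2x$ (getting $(n-\kappa)/2$ or $n/2$ pairs according as $z$ lies in the image or not), and multiply by the number of remaining disjoint pairs, which is your $m(h)(m(h)-1)$. The only difference is that you derive the characterization of the minimal vectors and the fact that $|L_G|=2$ inline (correctly), whereas the paper cites these facts from its references.
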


\begin{proof}
As shown in \cite{bo_et_al, fu_ma}, every minimal vector of $L_G$ is of the form $p+q-r-s$ where $p,q,r,s \in G$ are distinct
and $p+q = r+s$. Consider the homomorphism $\tau:G \to G$ defined by $\tau(p) = 2p$. The kernel of $\tau$ is the
subgroup $G_2$ of $G$ and the image $\mathrm{Im}(\tau)$ of $\tau$  has $n/\kappa$ points.
Fix an element  $z$ of $G$. First we count the number of solutions to the equation
$p + q = z$ where $p,q $ are distinct elements of $G$.  Observe that
$p = q$ if and only if $z \in \mathrm{Im}(\tau)$.

If $z \in \mathrm{Im}(\tau) $, there are $\kappa$ solutions $p$ to $2p = z$.
Thus there are $n-\kappa$ possible  $p$ such that
$q := z-p \ne p$, and so there are $(n-\kappa)/2$ pairs $p,q$ such that $p + q = z$
and $p \ne q$. Hence the number of pairs $r,s$ disjoint from $\{p,q\}$ and such that $r + s = z$ is $(n-\kappa -2)/2$.
In total, there are $(n-\kappa)/2 \cdot (n-\kappa -2)/2 = (n-\kappa)(n-\kappa-2)/4$
possible minimal vectors $p+q-r-s$ such that $p+q = z = r + s$.
The size of the image of $\tau$ is $\frac{n}{\kappa}$  so the total number of
possible minimal vectors $p+q-r-s$ such that $p+q = z = r + s$ with $z \in \mathrm{Im}(\tau)$ is
$\frac{n}{\kappa} \cdot \frac{(n-\kappa)(n-\kappa-2)}{4}.$

If $z  \not \in \mathrm{Im}(\tau) $, there are no solutions $p$ to $2p = z$.
A similar reasoning as above shows that there are
$(n-\frac{n}{\kappa})\cdot \frac{n(n-2)}{4}$  minimal vectors $p+q-r-s$ with
$p + q   \not \in \mathrm{Im}(\tau) $. Thus, by the above argument, the number of minimal vectors of $L_G$ is given by (\ref{numminvecs}).
\end{proof}

We remark that $\kappa=1$ if and only if and only if $n$ is an odd number while $\kappa=n$ if and only if
$G \cong (\mathbb{Z}/2\mathbb{Z})^\nu$ for some $\nu \ge 2$. We now embark on our main result, piece by piece. The cases $|G|=2$ and $|G|=3$ are trivial. For $|G|=2$, the lattice $L_G$ is the root lattice $A_1$ stretched by the factor $2$, and if $|G|=3$, then $L_G$ is the hexagonal lattice $A_2$ stretched by the factor $\sqrt{3}$. Both lattices are known to be strongly eutactic, hence we are left with the case of $|G| \ge 4$.

\begin{thm} \label{step1} Let $|G| = n \ge 4$ be even and suppose $G$ is not isomorphic to a power of $\zed/2\zed$. Then $L_G$ is not strongly eutactic.
\end{thm}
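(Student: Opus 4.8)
The plan is to reformulate strong eutaxy in terms of the \emph{frame operator}
$$T = \sum_{\bx \in S(L_G)} \bx\bx^{\T},$$
a real symmetric operator on $\real^n$, where I index coordinates by the elements of $G$, writing $\real^n = \real^G$ with standard basis $\{e_g\}$. By criterion~(\ref{mar}), $L_G$ is strongly eutactic if and only if $T$ acts as a scalar on $\spn_\real L_G = \bone^\perp$, where $\bone = \sum_{g} e_g$. The structural observation is that, as recorded in the proof of Proposition~\ref{minum}, every minimal vector has the form $e_p + e_q - e_r - e_s$ with $p+q=r+s$, and both the translations $e_g \mapsto e_{g+t}$ and the inversion $e_g \mapsto e_{-g}$ permute $S(L_G)$. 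Hence $T$ commutes with the regular representation of $G$; that is, $T$ is a $G$-circulant with an even symbol $c\colon G \to \real$, $T_{g,h} = c(g-h)$, simultaneously diagonalized by the characters $\chi \in \widehat G$ with real eigenvalues $\lambda_\chi = \sum_{u \in G} c(u)\chi(u)$. Since $\bone$ is the eigenvector of the trivial character $\chi_0$ and the eigenvectors $(\chi(g))_g$ of the nontrivial characters span $\bone^\perp$, strong eutaxy is equivalent to $\lambda_\chi$ being the same for every \emph{nontrivial} $\chi$.

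Next I would compute these eigenvalues from
$$n\lambda_\chi = \sum_{\bx \in S(L_G)} \Bigl| \sum_g x_g \chi(g)\Bigr|^2 = \sum_{\bx} |\chi(p)+\chi(q)-\chi(r)-\chi(s)|^2,$$
or, equivalently, by directly counting for each ordered pair $g \ne h$ the minimal vectors in which $g,h$ lie in the same part versus opposite parts of the support $\{p,q\}\sqcup\{r,s\}$. This counting splits according to whether $g-h$ lies in $G_2 = \{x : 2x = 0\}$ and whether it lies in $2G = \Img(\tau)$: the first governs the ``opposite part'' count ($n-2$ versus $n-3$) and the second the number of competing pairs with a fixed sum. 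Carrying out the Fourier sum and using orthogonality of characters over the subgroups $2G$ and $G_2$, together with $|2G| = n/\kappa$ and $|G_2| = \kappa$, I expect the eigenvalues to take the form
$$\lambda_\chi = \gamma - Q(\chi), \qquad Q(\chi) = n\cdot\mathbf 1[\chi|_{2G} = 1] + 2\kappa\cdot \mathbf 1[\chi|_{G_2}=1],$$
with $\gamma$ a constant independent of $\chi$. Since $\chi|_{2G} = 1$ is the same as $\chi^2 = 1$, strong eutaxy reduces to the purely group-theoretic statement that $Q$ is constant on the nontrivial characters.

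To finish, I would show that under the hypotheses $Q$ is \emph{not} constant. The assumptions ($n$ even, $G \not\cong (\zed/2\zed)^\nu$) amount exactly to $1 < \kappa < n$, and I would split into two cases. If $G$ has nontrivial odd part, take a nontrivial $\chi$ that is trivial on the $2$-part and nontrivial on the odd part: then $\chi|_{2G}\ne 1$ while $\chi|_{G_2}=1$, so $Q(\chi) = 2\kappa$; and take a nontrivial order-$2$ character $\chi'$ supported on the $2$-part, so $Q(\chi') \in \{n, n+2\kappa\}$. A nontrivial odd part forces $\kappa \le |G|/3$, hence $2\kappa < n$, so these values differ. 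If instead $G$ is a $2$-group, the hypothesis forces a cyclic factor $\zed/2^a\zed$ with $a \ge 2$; the order-$2$ character from that factor is trivial on $G_2$ (because $2^{a-1}$ is even), giving $Q = n+2\kappa$, whereas a character of order $4$ from the same factor has $\chi^2 \ne 1$, giving $Q \in \{0, 2\kappa\}$, again different. In every case $Q$ is non-constant, so $L_G$ is not strongly eutactic.

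The main obstacle is the bookkeeping in the second step: correctly enumerating, for a fixed pair $(g,h)$, the minimal vectors by the placement of $g$ and $h$ in the support, and verifying that the resulting symbol $c(u)$ depends only on the two membership conditions $u \in G_2$ and $u \in 2G$. The conceptual crux, however, is the final step, namely confirming that the two restriction conditions ``$\chi$ trivial on $2G$'' and ``$\chi$ trivial on $G_2$'' genuinely decouple. This is precisely where the structural hypothesis enters: an order-$2$ character coming from a cyclic $2$-factor $\zed/2^a\zed$ is trivial on $G_2$ if and only if $a \ge 2$, which is exactly the feature separating the excluded groups $(\zed/2\zed)^\nu$ (all of whose order-$2$ characters are nontrivial on $G_2$) from every other even-order group.
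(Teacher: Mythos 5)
Your proposal is correct in outline and takes a genuinely different route from the paper. The paper's proof of Theorem~\ref{step1} is a five-line divisibility obstruction: it applies the criterion~\eqref{mar} to a \emph{single} minimal vector $\bwy$, for which the left-hand side $\sum_{\bx\in S(L_G)}(\bx,\bwy)^2$ is manifestly an integer while the right-hand side evaluates to $4n(n-3)+4n(\kappa-1)/(n-1)$; since $n-1$ is odd, it is coprime to $4n$, so integrality forces $(n-1)\mid(\kappa-1)$, contradicting $1<\kappa<n$. You instead diagonalize the frame operator over the characters of $G$, which is heavier but more informative. For the record, the bookkeeping you defer does close as you predict: counting same-part versus opposite-part placements of an ordered pair $(g,h)$ with $u=g-h\neq 0$ gives $c(u)=-n+4-\kappa\,\mathbf{1}[u\in 2G]-2\,\mathbf{1}[u\in G_2]$ (the dependence on $g+h$ collapses to a dependence on $u$ because $g+h\in 2G\iff g-h\in 2G$), while $c(0)=n^2-4n+2+\kappa$, and character orthogonality then yields exactly your formula with $\gamma=n^2-3n+2\kappa$; one checks $\lambda_{\chi_0}=0$, as it must be since $T\mathbbm{1}=\bo$. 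Your concluding case analysis is also sound, including the care needed when $n=2\kappa$ (where the value sets $\{n,n+2\kappa\}$ and $\{0,2\kappa\}$ could a priori overlap), which your specific choices of characters avoid. What your method buys is a uniform proof of all of Theorem~\ref{MAIN}: when $\kappa=1$ the quantity $Q$ is identically $2$ on nontrivial characters, and when $\kappa=n$ it is identically $n$, so the same computation delivers Theorems~\ref{step2} (for $p=2$) and~\ref{step3}, for which the paper uses two further, unrelated arguments (a doubly-transitive permutation character for \ref{step2} and a long combinatorial identity for \ref{step3}). What the paper's route buys is brevity. The one caveat: your central eigenvalue formula is stated as an expectation rather than derived, so as written this is a correct plan rather than a complete proof; the counting step is routine but must actually be carried out.
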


\proof
Working towards a contradiction, assume $L_G$ is strongly eutactic.
Let $\kappa$ be as in Proposition~\ref{minum}. Our hypothesis is equivalent to the restriction $1 < \kappa < n$. The cardinality $|S(L_G)|$ is given by~(\ref{numminvecs}),
and an elementary computation shows that~(\ref{numminvecs})
is equal to
\[\frac{n(n^2-4n+2+\kappa)}{4}=n\frac{(n-1)(n-3)+\kappa-1}{4}.\]
In \cite{bo_et_al}, we proved that the minimal norm $|L_G|$ equals $2$. Pick a minimal vector
$\bwy \in S(L_G)$. As the rank of $L_G$ is $n-1$, we deduce from (\ref{mar}) that
\begin{eqnarray*}
& & \sum_{\bx \in S(L_G)} (\bx,\bwy)^2 =\frac{|L_G|^2|S(L_G)|}{n-1}(\bwy,\bwy)\\
& & =2^2n \frac{(n-1)(n-3)+\kappa-1}{4(n-1)}\cdot 4
=4n(n-3)+\frac{4n(\kappa-1)}{n-1}.
\end{eqnarray*}
The squared inner products $(\bx,\bwy)^2$ are integers and hence $4n(\kappa-1)/(n-1)$ must also be an integer. However, since
$n-1$ is odd, $4n$ and $n-1$ do not have common prime divisors, which means the sum is an integer if and only if $n-1$ divides $\kappa-1$.
But this contradicts the restriction $1 < \kappa < n$.
\endproof

\begin{thm} \label{step2} Let $G \cong ( \mathbb{Z} / p \mathbb{Z})^\nu$ be elementary Abelian. Then $L_G$ is strongly eutactic.
\end{thm}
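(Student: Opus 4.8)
The plan is to exploit the large symmetry group that the elementary Abelian structure confers on $L_G$. Write $G \cong \ff_p^\nu$, put $n = |G| = p^\nu$, index the coordinates of $\real^n$ by the elements of $G$, and (the cases $|G|\le 3$ being among the trivial ones already disposed of) assume $n \ge 4$. Consider the symmetric matrix
\[
M = \sum_{\bx \in S(L_G)} \bx\, \bx^\top .
\]
By the criterion \eqref{mar}, $L_G$ is strongly eutactic if and only if $\bwy^\top M \bwy = c\,(\bwy,\bwy)$ for all $\bwy \in \spn_\real L_G$. Every minimal vector lies in the hyperplane $H := \spn_\real L_G = \{\bx : \sum_a x_a = 0\}$, so $\bx^\top\bone = 0$ and hence $M\bone = \bo$; being symmetric, $M$ then maps $H$ into itself. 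Thus the eutaxy condition is exactly that $M$ restricts to a \emph{scalar} operator $c\,I_H$ on $H$. Moreover, once $M|_H$ is known to be scalar, the constant is automatically the required $c = 4|S(L_G)|/(n-1)$ by taking traces, since $\tr M = \sum_\bx \|\bx\|^2 = 4|S(L_G)|$. So it suffices to prove that $M|_H$ is scalar.

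First I would record the symmetries. For $t \in G$ the coordinate permutation $T_t:\be_a \mapsto \be_{a+t}$, and for $\phi \in \Aut(G)$ the permutation $P_\phi:\be_a \mapsto \be_{\phi(a)}$, both preserve the two defining conditions $\sum_a x_a = 0$ and $\sum_a x_a\,a = 0$ of $L_G$ (for $T_t$ because $\sum_a x_a(a+t) = \sum_a x_a a + t\sum_a x_a$, and for $P_\phi$ because $\sum_a x_a\,\phi(a) = \phi(\sum_a x_a a)$); hence they are orthogonal automorphisms of $L_G$ and permute $S(L_G)$, so each commutes with $M$. In particular $M$ is $G$-circulant and is simultaneously diagonalized by the characters of $G$: for $\chi \in \widehat{G}$ the vector $\bv_\chi = (\chi(a))_{a\in G}$ is an eigenvector with eigenvalue
\[
\lambda_\chi = \tfrac1n\, \bv_\chi^{*} M \bv_\chi = \frac1n \sum_{\bx \in S(L_G)} \Bigl|\, \sum_{a\in G} x_a\,\chi(a)\, \Bigr|^2 .
\]
The trivial character gives $\lambda_{\chi_0}=0$ (the $\bone$-direction, orthogonal to $H$), and the vectors $\bv_\chi$ with $\chi \ne \chi_0$ form an orthogonal basis of the complexification of $H$. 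Hence $M|_H$ is scalar if and only if all $\lambda_\chi$ with $\chi \neq \chi_0$ coincide.

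It remains to prove that equality, and here the elementary Abelian hypothesis enters decisively. I would first show that $\lambda_\chi$ depends only on the $\Aut(G)$-orbit of $\chi$: writing $\bx' = P_\phi\bx$ and substituting $a = \phi^{-1}(b)$ gives $\sum_a x_a (\chi\circ\phi)(a) = \sum_b x'_b\,\chi(b)$, and since $P_\phi$ permutes $S(L_G)$, reindexing yields $\lambda_{\chi\circ\phi} = \lambda_\chi$. Second, identify $\widehat{G} \cong \ff_p^\nu$ via $\chi_\alpha(x) = \omega^{\alpha\cdot x}$ with $\omega = e^{2\pi i/p}$; then $\chi_\alpha\circ\phi = \chi_{\phi^\top\alpha}$, and as $\phi$ ranges over $\Aut(G) = \GL_\nu(\ff_p)$ so does $\phi^\top$, which acts transitively on the nonzero $\alpha$. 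Thus $\Aut(G)$ acts transitively on the nontrivial characters, forcing all $\lambda_\chi$ ($\chi \ne \chi_0$) to agree. Therefore $M|_H = c\,I_H$, and $L_G$ is strongly eutactic.

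The main obstacle is conceptual rather than computational: recognizing that the affine symmetry group $G \rtimes \Aut(G)$ acting on the coordinates is precisely what makes the eutaxy condition automatic. (Equivalently, $G \rtimes \GL_\nu(\ff_p) = \mathrm{AGL}_\nu(\ff_p)$ acts $2$-transitively on $G$, so $H$ is an irreducible real module and Schur's lemma forces the $M$-invariant symmetric form to be a scalar.) This mechanism is special to the elementary Abelian case, where $\Aut(G)$ is transitive on nontrivial characters; for a general odd-order $G$ the characters split into several orbits and a different argument is needed. Were one instead to proceed directly by computing the entry $M_{0,u} = \sum_\bx x_0 x_u$ through counting the minimal vectors $p+q-r-s$ that contain prescribed coordinates, the crux would be to show this count is independent of the nonzero $u \in G$ — which is exactly the combinatorial shadow of the transitivity used above.
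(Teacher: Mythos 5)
Your proof is correct and rests on the same key idea as the paper's: the holomorph $G \rtimes \Aut(G)$ acts doubly transitively on $G$ because $G$ is elementary Abelian, so the orthogonal complement of the all-ones line is an irreducible module and the invariant quadratic form $\bwy \mapsto \sum_{\bx \in S(L_G)}(\bx,\bwy)^2$ is forced to be a scalar multiple of $(\bwy,\bwy)$. The only difference is packaging: where the paper cites Huppert (Theorem~11.6\,d)) for the two-summand decomposition of the permutation representation and Martinet (Theorem~3.6.6) for the implication ``irreducibly acting automorphism group implies strongly eutactic,'' you make both steps self-contained by diagonalizing $\sum_{\bx \in S(L_G)} \bx\bx^{\top}$ with the characters of $G$ and invoking the transitivity of $\GL_\nu(\mathbb{F}_p)$ on nontrivial characters.
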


\proof
The automorphism group $\Aut(L_G)$ contains the holomorph $G \rtimes \Aut(G)$, which acts doubly-transitive on $G$ since $G$ is elementary Abelian. By Theorem~11.6~d) of \cite{huppert}, the associated permutation character decomposes into two irreducible characters, so $\mathbb{R}^{p^\nu}$ decomposes into two subspaces on which $G \rtimes \Aut(G)$ acts irreducibly. One of them is $\spn_{\real} \{ (1,\dots,1) \}$, so the other one is its complement $\spn_\real L_G$. The assertion now follows from Theorem~3.6.6 in~\cite{martinet}.
\endproof

\begin{thm} \label{step3} Let $|G| = n \ge 5$ be odd. Then $L_G$ is strongly eutactic.
\end{thm}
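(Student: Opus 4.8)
The plan is to verify the criterion \eqref{mar} directly by computing the symmetric ``design matrix'' $M := \sum_{\bx \in S(L_G)} \bx \bx^\top$ and showing that it acts as a scalar on $\spn_\real L_G$. Index the standard coordinates of $\real^n$ by the elements of $G$, so that a minimal vector $\bx$ has coordinates $x_g = +1$ for $g$ in one pair $\{p,q\}$, $x_g = -1$ for $g$ in a second pair $\{r,s\}$, and $x_g = 0$ otherwise, where $p,q,r,s$ are distinct and $p+q = r+s$. Recall from~\cite{bo_et_al} that $|L_G| = 2$, that $\spn_\real L_G$ is the rank-$(n-1)$ hyperplane $\{\bwy : \sum_g y_g = 0\}$, and that since $n$ is odd we have $\kappa = 1$, whence Proposition~\ref{minum} gives $|S(L_G)| = n(n-1)(n-3)/4$. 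For any $\bwy$ one has $\sum_{\bx}(\bx,\bwy)^2 = \bwy^\top M \bwy$, so \eqref{mar} holds if and only if $M$ restricts on $\spn_\real L_G$ to the scalar $|L_G|^2 |S(L_G)|/(n-1) = n(n-3)$.

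First I would observe that the translations $g \mapsto g+t$ permute the coordinates and carry $S(L_G)$ onto itself, since they preserve the relation $p+q = r+s$; hence $M$ commutes with the associated permutation matrices, so $M_{g,h}$ depends only on $g-h$, and we may write $M_{g,h} = f(g-h)$. It therefore suffices to compute the two quantities $f(0)$ and $f(d)$ for $d \ne 0$. The diagonal entry follows from $\sum_g M_{g,g} = \trace M = \sum_{\bx} \|\bx\|^2 = 4\,|S(L_G)|$, which by constancy of the diagonal gives $f(0) = 4\,|S(L_G)|/n = (n-1)(n-3)$.

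The heart of the argument is the off-diagonal entry $f(d) = \sum_{\bx} x_d x_0$, which I would compute by classifying the minimal vectors according to how the fixed elements $0$ and $d$ sit among the four support points. If $\{0,d\}$ is either the positive pair or the negative pair (forcing the common sum to equal $d$), then $x_0 x_d = +1$; since $n$ odd makes doubling a bijection, every element of $G$ is the sum of exactly $(n-1)/2$ distinct unordered pairs, and one checks there are $2\cdot(n-3)/2 = n-3$ such vectors. If instead $0$ and $d$ lie in opposite pairs, then $x_0 x_d = -1$, and a short count shows that in each of the two cases the common sum $z$ ranges over $G$ minus the three values $0, d, 2d$, which are \emph{distinct precisely because $n$ is odd}, giving $n-3$ vectors in each case and $2(n-3)$ in total. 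Hence $f(d) = (n-3) - 2(n-3) = -(n-3)$, \emph{independent of $d$}. Consequently $M = (n-3)(nI - J)$, where $J$ is the all-ones matrix; as $J$ annihilates $\spn_\real L_G$, the matrix $M$ acts there as the scalar $n(n-3)$, exactly the constant required by~\eqref{mar}, completing the proof. The one delicate point is the uniformity of the opposite-pair count: it is exactly the oddness of $n$ that keeps $0, d, 2d$ distinct and every sum uniformly halvable, and this uniformity is precisely what breaks down in the even case of Theorem~\ref{step1}.
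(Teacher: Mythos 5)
Your proof is correct, and it takes a genuinely different (and arguably slicker) route than the paper's. The paper verifies the criterion \eqref{marie} on the basis $\bff_j = \be_1 - \be_{j+1}$ by expanding the sixteen-term product $f_{j,k}(a,b,c,d)$ over the pair-sets $R_z, S_z$ and invoking a list of ad hoc identities (Lemma~\ref{ident}), whereas you verify \eqref{mar} by computing the matrix $M=\sum_{\bx\in S(L_G)}\bx\bx^\top$ directly. Your key structural observation --- that the regular action of $G$ on the coordinates preserves $S(L_G)$, so $M$ is a $G$-circulant with $M_{g,h}=f(g-h)$ --- has no counterpart in the paper and reduces the whole verification to two scalars: $f(0)=(n-1)(n-3)$ from the trace, and $f(d)=-(n-3)$ from a clean count of how $\{0,d\}$ can sit inside a support $\{p,q,r,s\}$ (same-pair versus opposite-pair), where oddness of $n$ enters exactly as you say, through the unique halvability of every element and the distinctness of $0,d,2d$. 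I checked your counts: $(n-3)/2$ choices of the complementary pair in each same-pair case, and $n-3$ admissible common sums $z\notin\{0,d,2d\}$ in each opposite-pair case, which is consistent with the incidence identity $n(n-1)\cdot 3(n-3)=12\,|S(L_G)|$. The resulting $M=(n-3)(nI-J)$ acts as $n(n-3)$ on the hyperplane $\sum_g y_g=0$, which is precisely the eutaxy constant $|L_G|^2|S(L_G)|/(n-1)$. What your approach buys is conceptual economy (no basis, no sixteen-term expansion) and a transparent localization of where oddness is used; what the paper's approach buys is that its pair-set bookkeeping over $R_z$ and $S_z$ is set up to interface with the minimal-vector count of Proposition~\ref{minum} and generalizes mechanically to the even case analyzed in Theorem~\ref{step1}.
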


Our proof requires two lemmas and goes as follows. For $z \in G$ let $t_z \in \lbrace 1,\dots,n \rbrace$ be the unique element with $2g_{t_z} = z$ and put
\begin{align*}
R_z & := \lbrace (a,b) \in \lbrace 1,\dots,n \rbrace^2 : g_a + g_b = z, a \neq b \rbrace \\
S_z & := \lbrace (a,b) \in \lbrace 1,\dots,n \rbrace^2 : g_a + g_b = z \rbrace = R_z \cup \lbrace (t_z,t_z) \rbrace.
\end{align*}
We have $S_z = \lbrace (a,\varphi(a)) : a \in \lbrace 1,\dots,n \rbrace \rbrace$ for a bijection $\varphi$ whose unique fixed point is $t_z$.

The vectors $\bff_1 := \be_1 - \be_2, \dots, \bff_{n-1} := \be_1 - \be_n$  form a basis for $\spn_{\real} A_{n-1} = \spn_{\real} L_G$. Let $A \in \mathbb{R}^{n \times (n-1)}$ be the matrix whose $i$-th column is the (transposed) vector $\bff_i$, so
\begin{displaymath}
A = \left(\begin{array}{rrrrr}
1  &  1 & \dots & 1  &  1 \\
-1 &  0 & \dots & 0  &  0 \\
0  & -1 & \dots & 0  &  0 \\
   &    & \ddots &    &    \\
0  &  0 & \dots & -1 &  0 \\
0  &  0 & \dots &  0 & -1
\end{array}\right).
\end{displaymath}

\noindent
The following identities can be established by direct verification.

\begin{lem} \label{ident}
For every $j,k \in \lbrace 1,\dots,n-1 \rbrace$ the following identities hold:
\begin{enumerate}
\setlength{\itemsep}{8pt}
\item $ \sum\limits_{ a=1 }^n{ A_{a,j} } = 0 $.
\item $ \sum\limits_{ (a,b) \in S_z } A_{a,j}A_{a,k} = \sum\limits_{ a=1 }^n{ A_{a,j} A_{a,k} } = (\bff_j,\bff_k)$ for all $z \in G$.
\item $ \sum\limits_{ z \in G }{ \sum\limits_{ (a,b) \in S_z }{ A_{a,j}A_{b,k} } } = \sum\limits_{a,b = 1}^n{ A_{a,j}A_{b,k} } = \Big( \sum\limits_{a=1}^n{ A_{a,j} } \Big) \Big( \sum\limits_{b=1}^n{A_{b,k}} \Big) = 0$.
\item $ \sum\limits_{(a,b) \in S_z}{ \sum\limits_{(c,d) \in S_z}{ A_{a,j}A_{c,k} } } = \sum\limits_{(a,b) \in S_z}{A_{a,j} \Big( \sum\limits_{ (c,d) \in S_z }{ A_{c,k} } \Big) } = 0 $ for all $z \in G$.
\item $ \sum\limits_{z \in G} \sum\limits_{(a,b) \in S_z} A_{t_z,j}A_{t_z,k} = n \sum\limits_{z \in G} A_{t_z,j}A_{t_z,k} = n\ (\bff_j,\bff_k)$.
\end{enumerate}
In part 5 the statement is also true for the summands $A_{b,j}A_{c,k}$, $A_{b,j}A_{d,k}$ and $A_{a,j}A_{d,k}$. \\
\end{lem}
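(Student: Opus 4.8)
The plan is to isolate the handful of structural facts about the matrix $A$ and the index sets $S_z$ that turn every one of the five identities into pure re-indexing, and then to read each identity off from those facts. Since the $i$-th column of $A$ is $\bff_i = \be_1 - \be_{i+1}$, we have $A_{1,i} = 1$, $A_{i+1,i} = -1$, and $A_{a,i} = 0$ otherwise; in particular each column sums to $1 + (-1) = 0$, which is exactly part~1, and $\sum_{a=1}^n A_{a,j} A_{a,k} = (\bff_j, \bff_k)$ by the definition of the dot product. The three facts I would record about the index sets are: (i) because $S_z = \{(a, \varphi(a)) : a \in \{1,\dots,n\}\}$ with $\varphi$ a bijection, the first coordinate $a$ (and likewise the second coordinate $b$) runs exactly once over $\{1,\dots,n\}$ as $(a,b)$ ranges over $S_z$, so $|S_z| = n$; (ii) the family $\{S_z\}_{z \in G}$ partitions $\{1,\dots,n\}^2$, since every pair $(a,b)$ has exactly one sum $g_a + g_b = z$; and (iii) the map $z \mapsto t_z$ is a bijection from $G$ onto $\{1,\dots,n\}$.

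With these in hand each part is short. Part~2 follows from (i): the first coordinate exhausts $\{1,\dots,n\}$, so $\sum_{(a,b) \in S_z} A_{a,j} A_{a,k} = \sum_{a=1}^n A_{a,j} A_{a,k} = (\bff_j, \bff_k)$, independently of $z$. Part~3 follows from (ii): summing over all $z$ and then over $S_z$ is the same as summing over all $(a,b) \in \{1,\dots,n\}^2$, and this double sum factors as $\big(\sum_a A_{a,j}\big)\big(\sum_b A_{b,k}\big)$, which vanishes by part~1. Part~4 follows from (i): for fixed $z$ the inner sum $\sum_{(c,d) \in S_z} A_{c,k} = \sum_{c=1}^n A_{c,k} = 0$ by part~1, so the whole double sum over $S_z \times S_z$ is zero. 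Part~5 uses both (i) and (iii): the summand $A_{t_z,j} A_{t_z,k}$ does not depend on $(a,b)$, so summing it over the $|S_z| = n$ pairs of $S_z$ contributes the factor $n$; then by (iii) we may re-index $\sum_{z \in G} A_{t_z,j} A_{t_z,k} = \sum_{a=1}^n A_{a,j} A_{a,k} = (\bff_j, \bff_k)$, giving $n\,(\bff_j, \bff_k)$.

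The three variant summands flagged after part~5 are handled by the same two moves, now using the second-coordinate version of (i): the $b$-coordinate also runs exactly once over $\{1,\dots,n\}$ across $S_z$. In every case one sums the double sum over the \emph{second} pair $(c,d)$ first; the surviving inner factor is a full column sum, either $\sum_{(c,d) \in S_z} A_{c,k} = \sum_{c=1}^n A_{c,k} = 0$ (for $A_{b,j}A_{c,k}$) or $\sum_{(c,d) \in S_z} A_{d,k} = \sum_{d=1}^n A_{d,k} = 0$ (for $A_{a,j}A_{d,k}$ and $A_{b,j}A_{d,k}$), so each vanishes by part~1, exactly as in part~4.

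There is no genuinely hard step here — the lemma is bookkeeping — but the one place that requires care, and the only place where the hypothesis that $G$ is abelian of odd order enters, is fact~(iii) together with the very existence and uniqueness of $t_z$: these rest on the doubling map $x \mapsto 2x$ being a bijection of $G$, which holds precisely because $|G|$ is odd (its kernel $G_2$ is trivial). I would verify this at the outset so that $t_z$, and hence the bijection $z \mapsto t_z$ used in part~5, are well defined; everything else is re-indexing and an appeal to part~1.
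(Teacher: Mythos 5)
Your proof is correct and matches the paper's approach: the paper dismisses this lemma with ``can be established by direct verification,'' and your verification supplies exactly the intended bookkeeping (column sums vanish, each coordinate of $S_z$ exhausts $\{1,\dots,n\}$ since $S_z$ is the graph of a bijection, the $S_z$ partition $\{1,\dots,n\}^2$, and $z\mapsto t_z$ is a bijection because doubling is injective on a group of odd order). You also correctly read the final remark about the summands $A_{b,j}A_{c,k}$, $A_{b,j}A_{d,k}$, $A_{a,j}A_{d,k}$ as the variants of part~4 needed later, and your second-coordinate argument for them is right.
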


If $n$ is odd, then Proposition~\ref{minum} with $\kappa =1$ shows that the number of minimal vectors in $L_G$ is $m = \tfrac{n(n-1)(n-3)}{4}$. Let $S(L_G) = \lbrace \bu_1,\dots,\bu_m \rbrace$ be the set of minimal vectors. Since $n \ge 5$, we know from~\cite{bo_et_al} that the minimal norm in $L_G$ is $2$. Thus, by virtue of~\eqref{marie}, we have to show that
\begin{equation}
\sum\limits_{i=1}^m{ ( \bu_i, \bff_j ) ( \bu_i, \bff_k ) } = 4 \frac{m}{n-1} (\bff_j,\bff_k) = n(n-3) (\bff_j,\bff_k) \label{prev}
\end{equation}
for every $j,k \in \lbrace 1,\dots,n-1 \rbrace$. For $u = \be_a + \be_b - \be_c - \be_d \in S(L_G)$, we have
\begin{displaymath}
(\bu,\bff_j) = A_{a,j} + A_{b,j} - A_{c,j} - A_{d,j}.
\end{displaymath}
Let
\begin{displaymath}
f_{j,k}(a,b,c,d) := ( A_{a,j} + A_{b,j} - A_{c,j} - A_{d,j} )( A_{a,k} + A_{b,k} - A_{c,k} - A_{d,k} ).
\end{displaymath}
Then we get
\begin{align*}
\sum\limits_{i=1}^m{ ( \bu_i , \bff_j )( \bu_i , \bff_k ) } & = \sum\limits_{ z \in G }{ \tfrac{1}{2} \sum\limits_{ (a,b) \in R_z }{ \tfrac{1}{2} \sum\limits_{ \substack{ (c,d) \in \\ R_z \setminus \lbrace (a,b),(b,a) \rbrace } }{ f_{j,k}(a,b,c,d) } } } \\
& = \tfrac{1}{4} \sum\limits_{ z \in G }{ \sum\limits_{ (a,b) \in R_z }{ \Big( \sum\limits_{ (c,d) \in R_z }{ f_{j,k}(a,b,c,d) } - \underbrace{ f_{j,k}(a,b,a,b) }_{=0} - \underbrace{ f_{j,k}(a,b,b,a) }_{=0} \Big) } }. \\
\end{align*}
Consequently, equality \eqref{prev} will follow as soon as we have proved the following lemma.

\begin{lem} \label{sum_lemma}
\begin{displaymath}
\sum\limits_{ z \in G }{ \sum\limits_{ (a,b) \in R_z }{ \sum\limits_{ (c,d) \in R_z }{ f_{j,k}(a,b,c,d) } } } = 4n(n-3) (\bff_j, \bff_k).
\end{displaymath}
\end{lem}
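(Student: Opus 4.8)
The plan is to replace the awkward sums over $R_z$ (which carry the constraint $a \neq b$) by sums over $S_z = R_z \cup \{(t_z,t_z)\}$, since the graph structure $S_z = \{(a,\varphi(a)) : a \in \{1,\dots,n\}\}$ is precisely what makes the identities of Lemma~\ref{ident} applicable. Writing $R_z = S_z \setminus \{(t_z,t_z)\}$ and applying inclusion--exclusion to the inner double sum, I would decompose
\begin{equation*}
\sum_{(a,b) \in R_z}\sum_{(c,d) \in R_z} f_{j,k}(a,b,c,d) = \Sigma_1^z - \Sigma_2^z - \Sigma_3^z + \Sigma_4^z,
\end{equation*}
where $\Sigma_1^z$ is the full double sum over $S_z \times S_z$, where $\Sigma_2^z = \sum_{(c,d) \in S_z} f_{j,k}(t_z,t_z,c,d)$ and $\Sigma_3^z = \sum_{(a,b) \in S_z} f_{j,k}(a,b,t_z,t_z)$ are the single sums obtained by freezing one pair at $(t_z,t_z)$, and $\Sigma_4^z = f_{j,k}(t_z,t_z,t_z,t_z)$. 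Summing over $z$ and writing $\Sigma_i = \sum_z \Sigma_i^z$, the target becomes $\Sigma_1 - \Sigma_2 - \Sigma_3 + \Sigma_4$.

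To evaluate $\Sigma_1$, I would expand the product defining $f_{j,k}$ into its sixteen terms $\pm A_{x,j}A_{y,k}$ and sort them into three types. The four \emph{diagonal} terms $A_{a,j}A_{a,k},\, A_{b,j}A_{b,k},\, A_{c,j}A_{c,k},\, A_{d,j}A_{d,k}$ each contribute $n^2(\bff_j,\bff_k)$: part 2 of Lemma~\ref{ident} gives $(\bff_j,\bff_k)$ on each $S_z$, the free pair contributes a factor $|S_z| = n$, and there are $n$ values of $z$. The four \emph{same-pair cross} terms vanish by part 3, and the eight \emph{cross-pair} terms vanish by part 4 and its evident variants (equivalently, each factorizes into a column sum that is zero by part 1). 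Hence $\Sigma_1 = 4n^2(\bff_j,\bff_k)$.

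For $\Sigma_2$ I would expand $f_{j,k}(t_z,t_z,c,d) = (2A_{t_z,j} - A_{c,j} - A_{d,j})(2A_{t_z,k} - A_{c,k} - A_{d,k})$ and sum over $(c,d) \in S_z$ and then over $z$. The term $4A_{t_z,j}A_{t_z,k}$ yields $4n(\bff_j,\bff_k)$ by part 5; the four mixed terms containing a single $A_{c,\cdot}$ or $A_{d,\cdot}$ vanish because $\sum_{(c,d)\in S_z} A_{c,k} = \sum_{(c,d)\in S_z} A_{d,k} = 0$ by part 1; the two diagonal terms $A_{c,j}A_{c,k}$ and $A_{d,j}A_{d,k}$ each give $n(\bff_j,\bff_k)$ by part 2; and the two remaining cross terms vanish by part 3. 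Thus $\Sigma_2 = 6n(\bff_j,\bff_k)$. The symmetry $f_{j,k}(a,b,c,d) = f_{j,k}(c,d,a,b)$ (both factors change sign under the swap) gives $\Sigma_3 = \Sigma_2$ after relabeling, while $\Sigma_4 = 0$ since both factors of $f_{j,k}(t_z,t_z,t_z,t_z)$ vanish. Combining, $\Sigma_1 - \Sigma_2 - \Sigma_3 + \Sigma_4 = (4n^2 - 12n)(\bff_j,\bff_k) = 4n(n-3)(\bff_j,\bff_k)$, as claimed.

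The computation is essentially mechanical once the passage to $S_z$ is made; the only real content is the inclusion--exclusion bookkeeping and tracking which of the five identities of Lemma~\ref{ident} kills or evaluates each group of terms. The step most prone to error, and hence the one I would treat most carefully, is the evaluation of the correction sums $\Sigma_2$ and $\Sigma_3$, where the frozen diagonal entry $A_{t_z,j}$ interacts with the remaining $S_z$-sum and where part 5 (rather than part 2) is the operative identity.
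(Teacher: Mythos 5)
Your proposal is correct and follows essentially the same route as the paper: the same inclusion--exclusion passage from $R_z\times R_z$ to $S_z\times S_z$ with the two correction sums (which the paper immediately merges into $2\sum_z\sum_{(a,b)\in S_z}f_{j,k}(a,b,t_z,t_z)$ via the symmetry you note), the same sorting of the sixteen expanded terms against the five identities of Lemma~\ref{ident}, and the same values $4n^2(\bff_j,\bff_k)$ and $6n(\bff_j,\bff_k)$ for the main and correction sums.
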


\proof We have
\begin{align*}
& \sum\limits_{ z \in G } \sum\limits_{ (a,b) \in R_z } \sum\limits_{ (c,d) \in R_z }{ f_{j,k}(a,b,c,d) } \\
= & \sum\limits_{ z \in G } \sum\limits_{ (a,b) \in R_z } \Big( \sum\limits_{ (c,d) \in S_z }{ f_{j,k}(a,b,c,d) } - f_{j,k}(a,b,t_z,t_z) \Big)  \\
= & \sum\limits_{ z \in G } \Big( \sum\limits_{ (a,b) \in S_z } \Big[ \sum\limits_{ (c,d) \in S_z }{ f_{j,k}(a,b,c,d) } - f_{j,k}(a,b,t_z,t_z) \Big] \\
& \qquad - \sum\limits_{ (c,d) \in S_z }{ f_{j,k}(t_z,t_z,c,d) + \underbrace{ f_{j,k}(t_z,t_z,t_z,t_z) }_{=0} } \Big) \\
= & \sum\limits_{ z \in G } \sum\limits_{ (a,b) \in S_z  } \sum\limits_{ (c,d) \in S_z }{ f_{j,k}(a,b,c,d) } - \sum\limits_{ z \in G } \sum\limits_{ (a,b) \in S_z }{ f_{j,k}(a,b,t_z,t_z) } - \sum\limits_{ z \in G } \sum\limits_{ (c,d) \in S_z }{ \underbrace{ f_{j,k}(t_z,t_z,c,d) }_{ = f_{j,k}(c,d,t_z,t_z) } } \\
= & \sum\limits_{ z \in G } \sum\limits_{ (a,b) \in S_z } \sum\limits_{ (c,d) \in S_z }{ f_{j,k}(a,b,c,d) } - 2 \sum\limits_{ z \in G }
\sum\limits_{ (a,b) \in S_z }{ f_{j,k}(a,b,t_z,t_z) }.
\end{align*}
We now consider the two sums obtained individually. We expand $f_{j,k}$ and apply Lemma~\ref{ident} to every summand.

\par\noindent
\begin{align*}
\text{(i)} ~ ~ & \sum\limits_{ z \in G } \sum\limits_{ (a,b) \in S_z } \sum\limits_{ (c,d) \in S_z } { f_{j,k}(a,b,c,d) }  \\
= & \sum\limits_{ z \in G  } \sum\limits_{ (a,b) \in S_z } \sum\limits_{ (c,d) \in S_z } \Big( A_{a,j}A_{a,k} + A_{a,j}A_{b,k}  - A_{a,j}A_{c,k} - A_{a,j}A_{d,k} + A_{b,j}A_{a,k}  + A_{b,j}A_{b,k} \\
& \qquad \qquad \qquad \qquad - A_{b,j}A_{c,k} - A_{b,j}A_{d,k} - A_{c,j}A_{a,k} - A_{c,j}A_{b,k} + A_{c,j}A_{c,k} + A_{c,j}A_{d,k} \\
& \qquad \qquad \qquad \qquad - A_{d,j}A_{a,k} - A_{d,j}A_{b,k} + A_{d,j}A_{c,k} + A_{d,j}A_{d,k} \Big)  \\
= & \quad \sum\limits_{ z \in G } \sum\limits_{ (a,b) \in S_z } { n \Big( A_{a,j}A_{a,k} + A_{a,j}A_{b,k} + A_{b,j}A_{a,k} + A_{b,j}A_{b,k} \Big) } \\
& + \sum\limits_{ z \in G } \sum\limits_{ (c,d) \in S_z } { n \Big( A_{c,j}A_{c,k} + A_{c,j}A_{d,k} + A_{d,j}A_{c,k} + A_{d,j}A_{d,k} \Big) } \\
& - \sum\limits_{ z \in G } \sum\limits_{ (a,b) \in S_z } \sum\limits_{ (c,d) \in S_z } \Big( A_{a,j}A_{c,k} + A_{a,j}A_{d,k} + A_{b,j}A_{c,k} + A_{b,j}A_{d,k} + A_{c,j}A_{a,k} + A_{c,j}A_{b,k} \\
& \qquad \qquad \qquad \qquad \quad + A_{d,j}A_{a,k} + A_{d,j}A_{b,k} \Big) \\
= & \sum\limits_{ z \in G }{ 2 n (\bff_j,\bff_k) } + \sum\limits_{ z \in G }{ 2 n (\bff_j,\bff_k) }& \\
= & ~ 4 n^2 (\bff_j,\bff_k)&
\end{align*} \noindent
\begin{flalign*}
\text{(ii)} ~ ~ & \sum\limits_{ z \in G } \sum\limits_{ (a,b) \in S_z }{ f_{j,k}(a,b,t_z,t_z) } & \\
= & \sum\limits_{ z \in G } \sum\limits_{ (a,b) \in S_z }{ \Big( ( A_{a,j} + A_{b,j} - 2 A_{t_z,j} )( A_{a,k} + A_{b,k} - 2 A_{t_z,k} ) \Big) }  \\
= & \sum\limits_{ z \in G } \sum\limits_{ (a,b) \in S_z } \Big( A_{a,j}A_{a,k} + A_{a,j}A_{b,k} - 2A_{a,j}A_{t_z,k} + A_{b,j}A_{a,k} + A_{b,j}A_{b,k} - 2A_{b,j}A_{t_z,k} \\
& \qquad \qquad \quad - 2A_{t_z,j}A_{a,k} - 2A_{t_z,j}A_{b,k} + 4A_{t_z,j}A_{t_z,k} \Big) \\
= & ~ n (\bff_j,\bff_k) + n (\bff_j,\bff_k) + 4 n (\bff_j,\bff_k) \\
= & ~ 6 n (\bff_j,\bff_k)
\end{flalign*}
Thus, our sum is $(4n^2-2\cdot 6n)(\bff_j,\bff_k)=4n(n-3)(\bff_j,\bff_k)$, as asserted.
\endproof

As stated above, the previous lemma completes the proof of Theorem~\ref{step3}.
Putting Theorems~\ref{step1}, \ref{step2}, and~\ref{step3} together, we obtain our main result, Theorem~\ref{MAIN}.

\section{Automorphisms}
\label{automorphisms}

In this section we discuss properties of the automorphism groups of the lattices $L_G$, proving Theorem~\ref{auto}. We start with a preliminary result.

\begin{prop} \label{stab} The stabilizer of $L_G$ in $\Aut(A_{n-1})$ is isomorphic to $C_2 \times ( G \rtimes \Aut(G) )$.
\end{prop}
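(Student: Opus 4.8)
The plan is to understand $\Aut(A_{n-1})$ explicitly and then identify which of its elements preserve the sublattice $L_G$.

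\medskip

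\noindent\textbf{Step 1: Recall the automorphism group of $A_{n-1}$.}
It is a classical fact (see Martinet's book) that for $n-1 \geq 2$, the automorphism group of the root lattice $A_{n-1}$, realized inside $\spn_\real A_{n-1} \subset \real^n$, is isomorphic to $C_2 \times S_n$. Here $S_n$ acts by permuting the standard coordinates $\be_1,\dots,\be_n$ (these permutations fix the all-ones vector $\bone = (1,\dots,1)$ and hence restrict to orthogonal maps of the hyperplane $\spn_\real A_{n-1}$), and the factor $C_2$ is generated by the map $\sigma = -\id$, i.e. $\bx \mapsto -\bx$, which commutes with every coordinate permutation. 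So every $\phi \in \Aut(A_{n-1})$ can be written uniquely as $\phi = \sigma^\eps \pi$ with $\eps \in \{0,1\}$ and $\pi \in S_n$ acting by permuting coordinates.

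\medskip

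\noindent\textbf{Step 2: Reduce the stabilizer condition to a condition on permutations.}
Since $-\id$ maps $L_G$ to itself (as $L_G$ is a lattice, it is closed under negation), the stabilizer of $L_G$ in $\Aut(A_{n-1})$ is $C_2 \times H$, where
$$H = \{ \pi \in S_n : \pi \text{ permutes the coordinates and } \pi(L_G) = L_G \}.$$
Thus the whole problem reduces to computing $H$ and showing $H \cong G \rtimes \Aut(G)$. Here I would use the identification of lattice points with formal sums in $\Z[G]$, under which $L_G$ consists of those $\sum_j x_j g_j$ with $\sum_j x_j = 0$ and $\sum_j x_j g_j = 0$ in $G$.

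\medskip

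\noindent\textbf{Step 3: Show the holomorph lands in $H$, and that $H$ consists of affine maps of $G$.}
First I would verify the easy inclusion: for $\gamma \in \Aut(G)$ and $h \in G$, the permutation $g \mapsto \gamma(g) + h$ of the index set $G$ preserves both defining conditions of $L_G$ (the translation and the automorphism each send the relation $\sum x_j g_j = 0$ to an equivalent relation, using $\sum x_j = 0$ to absorb the translation term $h \sum x_j = 0$), so $G \rtimes \Aut(G) \hookrightarrow H$; this is the statement already quoted from \cite{bo_et_al}. The harder direction is to show every $\pi \in H$ arises this way. The key idea is that the minimal vectors $\be_p + \be_q - \be_r - \be_s$ (with $p+q=r+s$ in $G$) are permuted by $\pi$, and from the combinatorial structure of which quadruples form minimal vectors one can recover the group law; concretely, one shows that $\pi$ must send the relation $p+q=r+s$ to $\pi(p)+\pi(q)=\pi(r)+\pi(s)$, so that $\pi$ preserves the \emph{Latin-square/quasigroup} structure encoded by the addition table of $G$. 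A permutation of $G$ preserving the relation $p+q=r+s$ for all quadruples is precisely an affine map $g \mapsto \gamma(g)+h$ after normalizing $\pi(0)$ to $0$ by composing with a translation. Isolating $\gamma$ as a bijection fixing $0$ and preserving $p+q=r+s$, one checks $\gamma(p)+\gamma(q)=\gamma(p+q)$, so $\gamma \in \Aut(G)$.

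\medskip

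\noindent\textbf{Main obstacle.}
The delicate point will be Step 3's converse inclusion: proving that an arbitrary coordinate permutation preserving $L_G$ must respect the additive relation $p+q=r+s$, rather than merely permuting minimal vectors in some looser combinatorial way. The subtlety is that minimal vectors come in two combinatorial flavors depending on whether $p+q \in \Img(\tau)$ (cf. Proposition~\ref{minum}), and one must argue that $\pi$ cannot mix incompatible quadruple-relations; equivalently, one must show that the full additive structure of $G$ is reconstructible from the bare incidence data of which $4$-subsets support minimal vectors. I expect this to require extracting the group operation from the minimal-vector incidence structure and then invoking that semigroup/quasigroup homomorphisms fixing $0$ are exactly group automorphisms. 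Once Step 3 is complete, assembling $H = G \rtimes \Aut(G)$ (with $G$ the normal subgroup of translations and $\Aut(G)$ the stabilizer of the identity coordinate) and prepending the central $C_2 = \langle -\id\rangle$ gives the claimed isomorphism $C_2 \times (G \rtimes \Aut(G))$.
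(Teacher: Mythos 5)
Your proposal follows the same skeleton as the paper's proof: decompose $\Aut(A_{n-1}) \cong C_2 \times S_n$, peel off the central $C_2 = \langle -\id \rangle$, and then, after composing with a translation from $G$ so that the coordinate of $0=g_1$ is fixed, identify the remaining permutations with $\Aut(G)$. The difference is entirely in how that last identification is handled. The paper disposes of it in one line by invoking the result $\Aut(L_G) \cap S_{n-1} \cong \Aut(G)$ already proved in \cite{bo_et_al} and quoted in the introduction ($S_{n-1}$ being the permutations of $\be_2,\dots,\be_n$); what you flag as the ``main obstacle'' of Step 3 is exactly this cited fact, so in the context of this paper it is not an obstacle at all. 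If you do want a self-contained argument, note also that your worry about the permutation ``mixing incompatible quadruple-relations'' is a non-issue: you need not reconstruct the group law from the bare incidence data of which $4$-subsets support minimal vectors. A coordinate permutation $\pi$ stabilizing $L_G$ satisfies, for every $\bx \in L_G$, the condition $\sum_i x_i g_{\pi(i)} = 0$; applying this to the explicit lattice vectors $\be_p + \be_q - \be_{0} - \be_{p+q}$ (whenever $p$, $q$, $0$, $p+q$ are distinct) immediately yields $\pi(p)+\pi(q)=\pi(p+q)$ once $\pi(0)=0$, and the few degenerate cases ($p=q$, $q=-p$, etc.) are handled by similar short vectors; this is essentially how \cite{bo_et_al} proceeds. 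So your route is correct and would work, but it re-proves a known ingredient at the cost of the one genuinely delicate step, whereas the paper's proof buys brevity by citation.
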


\proof
Notice that $\Aut(A_{n-1}) \cong C_2 \times S_n$, where $S_n$ acts on the unit vectors $\be_1,\dots,\be_n$ by permutation. Let $\varphi \in S_n$ be an automorphism of $A_{n-1}$ which stabilizes $L_G$. In particular, this is an automorphism of $L_G$. Now, there exists a $\varphi' \in G \leq \Aut(L_G)$ such that $(\varphi' \circ \varphi)(\be_1) = \be_1$. So the composition is an automorphism of $L_G$, permuting the vectors $\be_2,\dots,\be_n$, therefore it is contained in $\Aut(G)$.
\endproof

In particular, we get $C_2 \times ( G \rtimes \Aut(G) ) \leq \Aut(L_G)$ and we want to show that this is the full automorphism group for $|G| \geq 15$. It is sufficient to show that every automorphism of $L_G$ stabilizes $A_{n-1}$, because then every automorphism of $L_G$ is an automorphism of $A_{n-1}$ which stabilizes $L_G$, and therefore it is contained in $C_2 \times ( G \rtimes \Aut(G) )$. However, to show this it is in turn enough to prove that the dual lattices $L_G^{\#}$ and $A_{n-1}^{\#}$ have the same minimal vectors, i.e. $S(L_G^{\#}) = S(A_{n-1}^{\#})$. Indeed, let $\varphi$ be an automorphism of a lattice $L$. For all $\bv \in L^{\#}$ we have
\begin{displaymath}
( \varphi(\bv), L ) = ( \varphi(\bv), \varphi(L) ) = (\bv,L) \subset \Z,
\end{displaymath}
which implies $\varphi(\bv) \in L^{\#}$ and $\varphi$ is an automorphism of $L^{\#}$. Now let $\varphi \in \Aut(L_G)$ be an automorphism of $L_G$. Then $\varphi$ is an automorphism of $L_G^{\#}$ permuting its minimal vectors. By assumption we have $S(L_G^{\#}) = S(A_{n-1}^{\#})$ and since $A_{n-1}^{\#}$ is generated by its minimal vectors, $\varphi$ is an automorphism of $A_{n-1}^{\#}$, and therefore one of $A_{n-1}$ stabilizing $L_G$. This implies that $\Aut(L_G)$ is contained in $C_2 \times ( G \rtimes \Aut(G) )$.

In Section~5.3 of~\cite{martinet}, the automorphism groups of the Barnes lattices are determined. On the other hand, Barnes lattices are precisely our lattices $L_G$ for cyclic groups $G$. The following comes from Theorem~5.3.7 of~\cite{martinet} (note that in the book $n$ is the dimension of the lattice, not the order of the group).

\begin{thm} \label{mincyclic} Let $G$ be cyclic and of order $n \geq 15$. Then we have $S(L_G^{\#}) = S(A_{n-1}^{\#})$.
\end{thm}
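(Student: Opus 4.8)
The plan is to argue directly, following the computation behind Theorem~5.3.7 of~\cite{martinet}. Since relabeling the elements of $G$ only permutes the coordinates of $\real^n$, and permutations lie in $\Aut(A_{n-1})$ and hence preserve both $A_{n-1}$ and $A_{n-1}^{\#}$, we may assume the standard cyclic model $G=\zed/n\zed$ with $g_i=i-1$; then the defining functional of $L_G$ is $\ell(\bx)=\sum_i (i-1)x_i \bmod n$, which is surjective onto $\zed/n\zed$, so $[A_{n-1}:L_G]=n$. Recall that the minimal vectors of $A_{n-1}^{\#}$ are exactly the $2n$ glue vectors $\pm(\be_j-\tfrac1n\bone)$, all of squared norm $\tfrac{n-1}{n}$ (the classes $[1]$ and $[n-1]$; every other glue class has strictly larger minimum $\tfrac{k(n-k)}{n}$). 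Because $L_G\subseteq A_{n-1}$ we have $A_{n-1}^{\#}\subseteq L_G^{\#}$, so these $2n$ vectors lie in $L_G^{\#}$. Hence it suffices to prove that no vector of $L_G^{\#}$ outside $A_{n-1}^{\#}$ has squared norm $\le\tfrac{n-1}{n}$; this forces the minimal norm of $L_G^{\#}$ to equal $\tfrac{n-1}{n}$ and to be attained only inside $A_{n-1}^{\#}$, giving $S(L_G^{\#})=S(A_{n-1}^{\#})$.

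Next I would make $L_G^{\#}$ explicit. Writing $\pi$ for the orthogonal projection of $\real^n$ onto $\spn_\real A_{n-1}=\bone^{\perp}$, one has $A_{n-1}^{\#}=\pi(\zed^n)$, and with $\boldsymbol g=(g_1,\dots,g_n)$ the vector $\tfrac1n\pi(\boldsymbol g)$ lies in $L_G^{\#}$ and has order $n$ modulo $A_{n-1}^{\#}$. Since the index is $n$, this gives $L_G^{\#}=A_{n-1}^{\#}+\zed\,\tfrac1n\pi(\boldsymbol g)$, a cyclic extension whose cosets are $\pi\!\big(\tfrac{k}{n}\boldsymbol g+\zed^n\big)$ for $k=0,1,\dots,n-1$. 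Consequently the minimal squared norm in coset $k$ equals $\min_{\bw\in\zed^n}\|\pi(\tfrac{k}{n}\boldsymbol g+\bw)\|^2$, i.e.\ the minimal sum of squared deviations from the mean of a vector whose $i$-th coordinate ranges over $\tfrac{k(i-1)}{n}+\zed$.

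The heart of the matter is to evaluate this coset minimum. Optimizing the lift and the mean jointly, it equals $\min_{c\in\real}\sum_{i=1}^n \operatorname{dist}\!\big(c,\tfrac{k(i-1)}{n}+\zed\big)^2$. Setting $d=\gcd(k,n)$ and $n'=n/d$, the fractional parts $\tfrac{k(i-1)}{n}\bmod 1$ run through the $n'$ equally spaced values $0,\tfrac1{n'},\dots,\tfrac{n'-1}{n'}$, each with multiplicity $d$; an elementary one-dimensional optimization (the ``balanced'' lift, placing the $d$ coincident fractional parts together and the $n'$ groups in one period, centered at $c=\tfrac{n'-1}{2n'}$) shows that this minimum is $\tfrac{n^2-d^2}{12n}$. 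Thus coset $k$ contains a vector of squared norm $\le\tfrac{n-1}{n}$ if and only if $\tfrac{n^2-d^2}{12n}\le\tfrac{n-1}{n}$, that is $d^2\ge n^2-12n+12$.

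Finally I would rule this out for every nontrivial coset. As $k$ ranges over $1,\dots,n-1$ the gcd $d=\gcd(k,n)$ ranges over the proper divisors of $n$, the largest being $n/p$ with $p$ the least prime factor; the binding case is $n$ even with $d=n/2$. The required strict inequality $d^2<n^2-12n+12$ then reads $n^2-16n+16>0$, which holds for all integers $n\ge15$ (and fails at $n=14$, where $k=7$ indeed produces a shorter dual vector, explaining the sharp threshold); for odd $n$ one has $d\le n/3$ and the inequality is already satisfied for $n\ge13$. Hence for $n\ge15$ every coset other than $A_{n-1}^{\#}$ has minimal squared norm strictly above $\tfrac{n-1}{n}$, and the claim follows. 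The main obstacle is the exact coset-minimum computation: one must prove that the balanced lift is genuinely optimal (so that $\tfrac{n^2-d^2}{12n}$ is the true minimum and not merely an upper bound), since the entire threshold $n\ge15$ hinges on this value being correct; the divisor case analysis is then routine.
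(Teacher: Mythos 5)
Your proof is correct in substance, but note that the paper does not actually prove this statement: it imports it wholesale from Theorem~5.3.7 of Martinet's book on Barnes lattices, so your self-contained argument is a genuinely different (and more informative) route. Your reduction is exactly right: $L_{C_n}^{\#}/A_{n-1}^{\#}$ is cyclic of order $n$ generated by $\tfrac1n\pi(\boldsymbol g)$ with $\boldsymbol g=(0,1,\dots,n-1)$, the minimum of coset $k$ is $\min_c\sum_i\operatorname{dist}(c,\tfrac{k(i-1)}{n}+\zed)^2$, and with $d=\gcd(k,n)$, $n'=n/d$ this is $d$ times the minimum for $n'$ equally spaced points, giving $\tfrac{n^2-d^2}{12n}$; the divisor analysis ($d\le n/2$, binding case $n$ even) then yields precisely the threshold $n^2-16n+16>0$, i.e.\ $n\ge 15$, and your observation that $n=14$, $d=7$ produces a genuinely shorter dual vector confirms the sharpness. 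The one step you flag as incomplete --- optimality of the balanced lift --- is real but closes in two lines: writing $t\in[0,\tfrac1{2n'}]$ for the distance from $c$ to the nearest of the $n'$ equally spaced points, the multiset of distances is $\{t\}\cup\{\tfrac jn'\pm t\}$ (with the antipodal point contributing $\tfrac12-t$ when $n'$ is even), so the sum of squares is the quadratic $n't^2+\beta t+\gamma$ with $\beta=0$ ($n'$ odd, minimum at $t=0$) or $\beta=-1$ ($n'$ even, minimum at $t=\tfrac1{2n'}$), and in both cases the minimum evaluates to $\tfrac{(n')^2-1}{12n'}$, as you claimed. What your approach buys is a complete elementary verification, an explanation of why $15$ is the exact cutoff, and independence from the Barnes-lattice literature; what the paper's citation buys is brevity and the authority of a published classification. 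I would only ask you to also record explicitly that $S(A_{n-1}^{\#})$ consists exactly of the $2n$ vectors $\pm(\be_j-\tfrac1n\bone)$ (the glue classes $[k]$ with $2\le k\le n-2$ have minimum $\tfrac{k(n-k)}{n}>\tfrac{n-1}{n}$, and the class $[0]=A_{n-1}$ has minimum $2$), since the conclusion $S(L_G^{\#})=S(A_{n-1}^{\#})$ uses that the minimum of the trivial coset is attained only at these vectors.
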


Now we will show that this theorem already implies the general case.

\begin{thm} Let $G = \lbrace g_1:=0, g_2, \dots,g_n \rbrace$ be an Abelian group of order $n \geq 15$. Then we have $S(L_G^{\#}) = S(A_{n-1}^{\#})$.
\end{thm}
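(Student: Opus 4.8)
The plan is to reduce the general Abelian case to the cyclic case already settled in Theorem~\ref{mincyclic}. Write $H = \spn_\real A_{n-1} = \{\bx \in \real^n : \sum_i x_i = 0\}$ and let $\pi\colon \real^n \to H$ be orthogonal projection, so that $\pi(\bx) = \bx - \tfrac1n\big(\sum_i x_i\big)\bone$ and $A_{n-1}^{\#} = \pi(\ZZ^n)$. The map $A_{n-1} \to G$, $\bx \mapsto \sum_i x_i g_i$, is onto with kernel $L_G$, so $L_G \subseteq A_{n-1}$ has index $n$; dualizing gives $A_{n-1}^{\#} \subseteq L_G^{\#}$ with $L_G^{\#}/A_{n-1}^{\#} \cong \widehat{G} := \Hom(G,\QQ/\ZZ)$. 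Concretely, each $\psi \in \widehat{G}$ gives a lift $\bw_\psi = (\widetilde\psi(g_1),\dots,\widetilde\psi(g_n))$, where $\widetilde\psi(g_i) \in [0,1)$ represents $\psi(g_i)$; since $(\pi(\bw_\psi),\bx) \equiv \psi\big(\sum_i x_i g_i\big) \pmod{\ZZ}$ for all $\bx \in A_{n-1}$, the vector $\pi(\bw_\psi)$ lies in $L_G^{\#}$, and $\psi \mapsto \pi(\bw_\psi) + A_{n-1}^{\#}$ realizes the isomorphism above. Because $A_{n-1}^{\#} \subseteq L_G^{\#}$, we will obtain $S(L_G^{\#}) = S(A_{n-1}^{\#})$ as soon as we show that every nontrivial coset $\pi(\bw_\psi) + A_{n-1}^{\#}$ (with $\psi \neq 1$) contains only vectors strictly longer than $|A_{n-1}^{\#}|$.

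The crux is the observation that the minimal length attained on the coset $\pi(\bw_\psi) + A_{n-1}^{\#}$ depends only on the order $d$ of $\psi$. Indeed, every vector of this coset has the form $\pi(\bu)$ with $\bu \in \bw_\psi + \ZZ^n$, that is, $u_i \equiv \psi(g_i) \pmod{\ZZ}$, and
\[
\|\pi(\bu)\|^2 = \sum_{i=1}^n u_i^2 - \frac1n\Big(\sum_{i=1}^n u_i\Big)^2
\]
is symmetric in the coordinates, so its minimum over the coset depends only on the multiset $\{\psi(g_i) \bmod \ZZ : 1 \le i \le n\}$. Now $\psi$ maps $G$ onto the cyclic group $\tfrac1d\ZZ/\ZZ$ with all fibres of size $|\ker\psi| = n/d$, so this multiset consists of each of $0,\tfrac1d,\dots,\tfrac{d-1}d$ repeated exactly $n/d$ times; it therefore depends only on $d$ and $n$, not on the particular group $G$.

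To close the reduction, note that every $\psi \in \widehat{G}$ has order $d$ dividing the exponent of $G$, which in turn divides $n$ for an Abelian group; hence $d \mid n$, and $\widehat{C_n} \cong C_n$ contains a character $\chi$ of the very same order $d$. By the previous paragraph the cosets $\pi(\bw_\psi) + A_{n-1}^{\#} \subset L_G^{\#}$ and $\pi(\bw_\chi) + A_{n-1}^{\#} \subset L_{C_n}^{\#}$ have equal minimal length. Theorem~\ref{mincyclic} gives $S(L_{C_n}^{\#}) = S(A_{n-1}^{\#})$, which forces $|L_{C_n}^{\#}| = |A_{n-1}^{\#}|$ and makes every nontrivial coset of $A_{n-1}^{\#}$ in $L_{C_n}^{\#}$ have minimal length strictly larger than $|A_{n-1}^{\#}|$, since any shortest vector of such a coset would be a minimal vector lying outside $A_{n-1}^{\#}$. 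Transporting this strict inequality back through the equality of coset minima, every nontrivial coset of $A_{n-1}^{\#}$ in $L_G^{\#}$ likewise exceeds $|A_{n-1}^{\#}|$; thus $|L_G^{\#}| = |A_{n-1}^{\#}|$ and the minimal vectors of $L_G^{\#}$ all lie in the trivial coset, giving $S(L_G^{\#}) = S(A_{n-1}^{\#})$.

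The main obstacle is the reduction lemma of the second paragraph, namely that the coset minimum depends only on the character order, and its clean interplay with the bookkeeping that converts the set equality $S(L_{C_n}^{\#}) = S(A_{n-1}^{\#})$ into a \emph{strict} length separation of the nontrivial cosets and back again. The auxiliary facts — that the exponent of $G$ divides $n$ and that every character order occurring in $\widehat{G}$ already occurs in $\widehat{C_n}$ — are routine but indispensable for matching $L_G^{\#}$ against $L_{C_n}^{\#}$ coset by coset.
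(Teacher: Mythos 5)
Your proposal is correct and follows essentially the same route as the paper: identify the cosets of $A_{n-1}^{\#}$ in $L_G^{\#}$ with characters $\varphi \in \Hom(G,\QQ/\ZZ)$, observe that the coordinate multiset of $(\varphi(g_1),\dots,\varphi(g_n))$ consists of $n/d$ copies of each element of the cyclic image of order $d$, and hence matches (up to a coordinate permutation, which is an isometry commuting with $\pi$) the coset of a character of $C_n$ of the same order, so that Theorem~\ref{mincyclic} supplies the strict separation of nontrivial cosets. The only cosmetic difference is that you phrase the permutation step as ``the coset minimum is a symmetric function of the coordinates'' while the paper writes down the permutation matrix explicitly.
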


\proof
First, we show that the residue classes of $L_G^{\#} / A_{n-1}^{\#}$ can be calculated with the dual group $G^* = \Hom(G, \Q / \Z)$ of $G$. Using this construction, we can permute the entries of a class to obtain one of $L_{C_n}^{\#} / A_{n-1}^{\#}$. Then the assertion can be deduced from Theorem~\ref{mincyclic}. Define
\begin{displaymath}
\widetilde{ L_G } = \left\{ (x_1,\dots,x_n) \in \Z^n : \sum_{i=2}^n x_i g_i = 0 \right\}.
\end{displaymath}
This is a lattice in $\Z^n$ with $\Z^n / \widetilde{L_G} \cong G$. The dual group of $G$ is $G^* = \Hom(G,\que / \zed)$ and the dual lattice $\widetilde{L_G}^{\#}$ is generated by $\Z^n$ and the vectors $(\varphi(g_1),\dots,\varphi(g_n))$, $\varphi \in G^*$ (here we can take an arbitrary rational representative of $\varphi(g_i) \in \Q / \Z$). Indeed, for all $(x_1,\dots,x_n) \in \widetilde{L_G}$ and $\varphi \in G^*$ we have
\begin{displaymath}
\sum_{i=1}^n x_i \varphi(g_i) = \sum_{i=1}^n \varphi( x_i g_i ) = \varphi \left( \sum_{i=1}^n x_i g_i \right) = \varphi(0) \in \Z
\end{displaymath}
and
\begin{displaymath}
\left< ~ \Z^n, ~ (\varphi(g_1),\dots,\varphi(g_n)) : \varphi \in G^* ~ \right> / \Z^n \cong G^*,
\end{displaymath}
which means we have found a lattice contained in $\widetilde{ L_G }^{\#}$ with the same determinant.
Now let
\begin{displaymath}
\pi : \real^n \to \spn_\real L_G, \quad x \mapsto \frac{(x,\mathbbm{1})}{n} \cdot \mathbbm{1},
\end{displaymath}
be the projection onto $\spn_\real L_G$, where $\mathbbm{1} \in \real^n$ denotes the all-one vector. All in all, we have
\begin{figure}[h]
\centering
\begin{tikzpicture}
\fill[black] (10,0) circle (0.05);
\fill[black] (10,1) circle (0.05);
\fill[black] (10,2) circle (0.05);
\fill[black] (10,3) circle (0.05);
\fill[black] (0,1) circle (0.05);
\fill[black] (0,2) circle (0.05);
\fill[black] (0,3) circle (0.05);
\draw (10,0) -- (10,1);
\draw (10,1) -- (10,2);
\draw (10,2) -- (10,3);
\draw (0,1) -- (0,2);
\draw (0,2) -- (0,3);

\coordinate[label=right: {$L_G^{\#} = \pi ( \widetilde{L_G}^{\#} ) $}] (LGd) at (10,3);
\coordinate[label=right:{ $A_{n-1}^{\#} = \pi (\Z^n)$ } ] (And) at (10,2);
\coordinate[label=right:{$A_{n-1} = \Z^n \cap \spn_\real L_G$}] (An) at (10,1);
\coordinate[label=right:{$L_G = \widetilde{L_G} \cap \spn_\real L_G$}] (LG) at (10,0);

\coordinate[label=left: \footnotesize $G$] (G) at (10,0.5);
\coordinate[label=left: \footnotesize $C_n$] (G) at (10,1.5);
\coordinate[label=left: \footnotesize {$G^*$}] (G) at (10,2.5);

\coordinate[label=right:{$\widetilde{L_G}^{\#} = \langle ~ \Z^n, ~ ( \varphi(g_1),\dots,\varphi(g_n) ) ~ : ~ \varphi \in G^* ~ \rangle$}] (LGdt) at (0,3);
\coordinate[label=right:{$\Z^n$}] (Zn) at (0,2);
\coordinate[label={[black]right:$\widetilde{L_G} = \lbrace (x_1,\dots,x_n) \in \Z^n ~ : ~ \sum x_i g_i = 0 \rbrace $}] (LGt) at (0,1);

\coordinate[label=left: \footnotesize $G$] (G) at (0,1.5);
\coordinate[label=left: \footnotesize {$G^*$}] (G) at (0,2.5);

\end{tikzpicture}
\end{figure}

We get $L_G^{\#} / A_{n-1}^{\#} = \lbrace ~ \pi (\varphi(g_1),\dots,\varphi(g_n))  + A_{n-1}^{\#} ~ : ~ \varphi \in G^* \rbrace$, and $S(L_G^{\#})$ equals $S(A_{n-1}^{\#})$ if and only if
\begin{displaymath}
\min \left\{\| v \| : v \in \pi (\varphi(g_1),\dots,\varphi(g_n)) + A_{n-1}^{\#} \right\} > \min( A_{n-1}^{\#}) = \sqrt{ \frac{n-1}{n} }
\end{displaymath}
for every $0 \neq \varphi \in G^*$. Consider such a $\varphi$. The group $\Q / \Z$ is isomorphic to the union of all roots of unity in $\C$ (via $\exp( \tfrac{2 \pi i k}{m} ) \mapsto \tfrac{k}{m} + \mathbb{Z}$), so the image of $\varphi$ is isomorphic to a finite subgroup of $\C^*$, and hence is cyclic. This means that the vector
\begin{displaymath}
( \varphi( g_1 ), \dots, \varphi( g_n ) )
\end{displaymath}
contains several copies of $\Img(\varphi)$, the image of $\varphi$. Now let $\Img(\varphi) = \langle q \rangle$ and let $C_n = \langle g \rangle$ be a cyclic group of order $n$. The homomorphism
\begin{displaymath}
\psi : C_n \to \Q / \Z, \quad g^i \mapsto i \cdot q
\end{displaymath}
has the same image as $\varphi$, hence there exists a permutation matrix $M$ with
\begin{displaymath}
( \varphi(g_1), \dots, \varphi(g_n) ) \cdot M = ( \psi(g^0), \psi(g), \dots, \psi(g^{n-1}) ).
\end{displaymath}
This permutation $M$ is an automorphism of $A_{n-1}^{\#}$ and commutes with the projection $\pi$, so we have
\begin{displaymath}
[ ~ \pi (\varphi(g_1), \dots, \varphi(g_n) ) + A_{n-1}^{\#} ~ ] \cdot M = \pi ( \psi(g^0), \psi(g), \dots, \psi( g^{n-1} )) + A_{n-1}^{\#}.
\end{displaymath}
The right-hand side is a residue class of $L_{C_n}^{\#} / A_{n-1}^{\#}$, and from Theorem~\ref{mincyclic} we know that its minimal norm is greater than $\min( A_{n-1}^{\#} )$. This completes the proof.
\endproof

\medskip
{\bf Acknowledgement:} We wish to thank Professor Gabriele Nebe for her helpful comments on the subject of this paper.

\bigskip
A. B\"ottcher, Fakult\"at f\"ur Mathematik, TU Chemnitz, 09107 Chemnitz, Germany

{\tt aboettch@mathematik.tu-chemnitz.de}

\medskip

S. Eisenbarth, Lehrstuhl D f\"ur Mathematik, RWTH Aachen, 52062 Aachen, Germany

{\tt simon.eisenbarth@rwth-aachen.de}

\medskip
L. Fukshansky, Department of Mathematics,  Claremont McKenna College,

850 Columbia Ave,
Claremont, CA 91711, USA

{\tt lenny@cmc.edu}

\medskip
S. R. Garcia, Department of Mathematics, Pomona College,

610 N. College Ave, Claremont, CA 91711, USA

{\tt stephan.garcia@pomona.edu}, URL: \url{http://pages.pomona.edu/~sg064747/}

\medskip
H. Maharaj, Department of Mathematics, Pomona College,

610 N. College Ave, Claremont, CA 91711, USA

{\tt hirenmaharaj@gmail.com}
\end{document}